\documentclass{amsart}
\usepackage{amsmath,amsthm,graphicx}
\newcommand{\ba}{\begin{array}}
\newcommand{\ea}{\end{array}}
\newcommand{\R}{\mathbb R}%
\newcommand{\C}{\mathbb C}%

\newcommand{\lgra}{\longrightarrow}
\newtheorem{Definition}{Definition}[section]
\newtheorem{Theorem}[Definition]{Theorem}
\newtheorem{Lemma}[Definition]{Lemma}
\newtheorem{Proposition}[Definition]{Proposition}
\newtheorem{Corollary}[Definition]{Corollary}

\setcounter{section}{0}

\begin{document}
\title[foliated immersions in a symplectic manifold]{Smooth maps of a foliated manifold in a symplectic manifold}
\author[M. Datta]{ Mahuya Datta }
\address[M. Datta]{Statistics and Mathematics Unit, Indian Statistical Institute,
203, B.T.Road, Calcutta 700 108, India\\ e-mail:
mahuya@isical.ac.in}
\author[R. Islam]{Md. Rabiul Islam}
\thanks{The second author is supported by the
CSIR fellowship}
\address[R. Islam]{Department of Pure Mathematics,
University College of Science, University of Calcutta, 35, P.Barua Sarani, Calcutta 700 019\\
e-mail: rabiulislam.cu@gmail.com} \subjclass{57R99, 58D10, 58J99}
\keywords{Foliations, foliated cohomology, foliated symplectic
forms} \maketitle
\section{Introduction}
Gromov proves in \cite{gromov} that the immersions of a smooth
manifold $M$ in a symplectic manifold $(N,\sigma)$ inducing a
given closed form $\omega$ on $M$ satisfy the $C^0$-dense
$h$-principle in the space of all continuous maps which pull back
the deRham cohomology class of $\sigma$ onto that of $\omega$. In
this paper we prove a foliated version of this result.

Let $M$ be a smooth manifold with a regular foliation ${\mathcal F}$
and let $\omega$ be a 2-form on $M$ which induces closed forms on
the leaves of ${\mathcal F}$ in the leaf topology. We shall refer to
the induced form on any leaf as the `restriction' of the global form
on it. A smooth map $f:(M,\mathcal{F})\lgra N$ is called a {\em
foliated immersion\/} if $f$ restricts to an immersion on each leaf
of the foliation. Further, if the restriction of $f^*\sigma$ is the
same as the restriction of $\omega$ on each leaf of the foliation
then $f$ is called a {\em foliated symplectic immersion\/}.

If $f$ is a foliated symplectic immersion then the derivative map
$Df$ gives rise to a bundle morphism $F:TM\lgra TN$ which
restricts to a monomorphism on $T\mathcal{F}{\subseteq}TM$ and
satisfies the condition $F^*\sigma = \omega$ on $T\mathcal{F}$. A
natural question is whether the existence of such a bundle map $F$
ensures the existence of a foliated immersion satisfying
$f^*\sigma=\omega$ on ${\mathcal F}$. As we shall see in this
paper, the obstruction to the existence of such an $f$ is only
topological in nature.

Let Symp$^0(T{\mathcal F},TN)$ denote the space of bundle
morphisms $F:TM{\rightarrow}TN$ such that $F$ restricted to
$T{\mathcal F}$ is a monomorphism and $F^*\sigma = \omega$ on
$T\mathcal{F}$. We endow this space with the $C^0$-compact-open
topology.

\begin{Theorem} Let $\omega$ be a foliated closed 2-form on
$(M,{\mathcal F})$ and let $(F_0,f_0):TM\lgra TN$ be a bundle
homomorphism which satisfies the following condition:
\begin{enumerate}\item $f_0$ pulls back the cohomology class of $\sigma$ onto the
foliated cohomology class of  $\omega$;\item $F_0$ is fibrewise
injective on $T\mathcal{F}$ and $F_0^*\sigma=\omega$ on
$T{\mathcal F}$.
\end{enumerate}
If $dim(\mathcal{F}) < dim(N)$, then the map $f_0$ admits a fine
$C^0$ approximation by a foliated symplectic immersion
$f:(M,\mathcal{F},\omega){\rightarrow}(N,\sigma)$ whose
differential $Df:TM{\rightarrow}TN$ is homotopic to $F_0$ in the
space Symp$^0(T{\mathcal F},TN)$.\label{main}\end{Theorem}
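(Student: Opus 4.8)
The plan is to recast the conclusion as a $C^0$-dense $h$-principle for the foliated differential relation $\mathcal{R}\subset J^1(M,N)$ whose holonomic solutions are exactly the foliated symplectic immersions. Because $\sigma$ is symplectic, hence locally exact, and $\omega$ restricts to locally exact forms on the leaves, the natural formal data for $\mathcal{R}$ consist of a bundle morphism lying in $\mathrm{Symp}^0(T\mathcal{F},TN)$ \emph{together with} the cohomological datum that $f^*\sigma$ and $\omega$ represent the same class in foliated cohomology $H^2_{\mathcal{F}}(M)$: condition~(2) supplies the bundle morphism and condition~(1) the cohomological datum, so $(F_0,f_0)$ is a formal solution of $\mathcal{R}$, and the task is to deform it to a holonomic one that is $C^0$-close to $f_0$ and remains a formal solution throughout — this last requirement is what yields the homotopy of $Df$ to $F_0$ inside $\mathrm{Symp}^0(T\mathcal{F},TN)$. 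The essential difficulty is that $\mathcal{R}$ is \emph{not} open, since $F^*\sigma=\omega$ on $T\mathcal{F}$ is a closed condition; thus the result is not a formal consequence of ampleness and must instead be bootstrapped from Gromov's theorem itself, which is precisely the non-foliated instance of $\mathcal{R}$ restricted to a single leaf $L$ — immersions $L\lgra N$ inducing the prescribed closed form — and is valid under the dimension hypothesis $\dim\mathcal{F}<\dim N$. I would use Gromov's theorem in its parametric, relative and $C^0$-dense form, all of which are standard for a convex-integration $h$-principle.

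The first step is to localise over a locally finite atlas of foliated charts $U_\alpha\cong P_\alpha\times T_\alpha$, with plaques $P_\alpha\times\{t\}$ of dimension $\dim\mathcal{F}$, chosen fine enough for the prescribed approximation function. Over a single chart the formal data restrict to a $T_\alpha$-family of formal solutions $P_\alpha\lgra N$ of Gromov's relation, and applying the parametric form of Gromov's theorem with the transverse coordinate $t\in T_\alpha$ as parameter produces a smooth $T_\alpha$-family of genuine symplectic immersions $C^0$-close to $f_0$, that is, a foliated symplectic immersion over $U_\alpha$, together with a homotopy of its leafwise differential back to $F_0$ through leafwise formal solutions; this is the local model. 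I would then globalise by the usual induction: given a foliated symplectic immersion already defined over $\overline{U_1\cup\cdots\cup U_k}$ on a slightly shrunk sub-atlas, extend it over $U_{k+1}$ by the \emph{relative} parametric Gromov theorem, rel a neighbourhood of the closed set where it is already defined, choosing the size of each perturbation so that the accumulated $C^0$-displacement from $f_0$ stays below the given positive function on $M$. The limit map $f$ is the desired foliated symplectic immersion, and concatenating the homotopies of leafwise differentials yields the homotopy of $Df$ to $F_0$ in $\mathrm{Symp}^0(T\mathcal{F},TN)$.

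I expect the main obstacle to be the globalisation, precisely because $\mathcal{R}$ is closed: genuine solutions cannot be averaged or patched, so the induction must run entirely through the relative convex-integration $h$-principle, and this is where the foliated-cohomology hypothesis~(1) does genuine work. On each contractible plaque Gromov's theorem imposes no cohomological constraint, but assembling the chartwise solutions into a single foliated symplectic immersion forces the plaquewise primitives of $f^*\sigma-\omega$ — built from local Liouville primitives of $\sigma$ and leafwise primitives of $\omega$ — to be reconciled both along the leaves and transversally across overlapping charts, which is possible exactly when $f_0^*[\sigma]=[\omega]$ in $H^2_{\mathcal{F}}(M)$; carrying this reconciliation coherently through the induction, in the spirit of a foliated de Rham / partition-of-unity (Mayer--Vietoris) argument, is the technical heart of the proof. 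The remaining points — maintaining the fine $C^0$ control over a possibly non-compact $M$, and keeping the homotopy of differentials inside $\mathrm{Symp}^0(T\mathcal{F},TN)$ rather than merely within the larger space of bundle monomorphisms on $T\mathcal{F}$ — are handled by the standard inductive refinement together with the fact that the parametric Gromov theorem furnishes the required homotopy at each stage, and so need no idea beyond careful bookkeeping.
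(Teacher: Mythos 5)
Your reduction to a chart-by-chart application of Gromov's iso-symplectic immersion theorem has a genuine gap at exactly the place you flag as ``the technical heart'': the relative patching step. The relation $f^*\sigma=\omega$ on $T\mathcal{F}$ is a closed condition, so there is no relative \emph{convex-integration} $h$-principle to invoke; Gromov's theorem for immersions inducing a prescribed closed form is not a convex-integration result but is proved by the sheaf-theoretic method (microflexibility via Nash--Moser infinitesimal inversion, a microextension, and sharply moving diffeotopies), and its relative form requires more than holonomicity of the formal solution near the closed set: one needs the defect $f^*\sigma-\omega$ to admit a leafwise primitive that vanishes near that set. In your induction the intersection of a plaque of $U_{k+1}$ with $U_1\cup\dots\cup U_k$ can have arbitrary topology, so the existence of such a primitive does not follow from hypothesis (1), and your appeal to a ``foliated Mayer--Vietoris reconciliation'' is precisely the missing argument rather than bookkeeping. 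There are secondary problems as well: the parametric $h$-principle produces families of solutions continuous in the transverse parameter $t$, whereas a foliated immersion of $U_\alpha$ must be jointly smooth in $t$; and the leafwise homotopies of differentials obtained chart by chart are not automatically compatible on overlaps, so the global homotopy of $Df$ to $F_0$ inside $\mathrm{Symp}^0(T\mathcal{F},TN)$ does not follow by concatenation.

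The paper avoids patching altogether and runs Gromov's machine globally. It first performs a microextension: $M$ is embedded as the zero section of the bundle $f_0^*TN/T\mathcal{F}$, yielding a foliated symplectic manifold $(M',\mathcal{F}',\omega')$ with $\dim\mathcal{F}'=\dim N$ and $i^*\omega'=\omega$ on $T\mathcal{F}$, via the foliated Poincar\'e lemma and the foliated Moser stability results of Sections 3--4. It then introduces a single global auxiliary sheaf $\mathcal{E}_{\mathcal{F}'}$ of pairs $(g,\varphi)$ with $g^*\tau+d\varphi=0$ on $T\mathcal{F}'$, where $\tau$ is a leafwise primitive of $\hat\omega=p_2^*\sigma-p_1^*\omega'$ near the graph of $f_0'$; this sheaf has the same local weak homotopy type as the solution sheaf and is microflexible because the linearised operator is infinitesimally invertible (Nash--Moser). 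Flexibility over $M$ comes from Hamiltonian-type foliated diffeotopies sharply moving $M$ inside $M'$ --- this is where $\dim\mathcal{F}<\dim N$ is actually used, since it makes $M$ of positive codimension in $M'$ --- and the Sheaf Homomorphism Theorem then gives the weak homotopy equivalence and the dense $h$-principle. Hypothesis (1) enters once, globally, to guarantee that $\hat\omega$ is foliated exact near the graph of $f_0'$ so that $\tau$, and hence $\mathcal{E}_{\mathcal{F}'}$, exists; no chartwise reconciliation of primitives is ever required.
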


In other words, foliated symplectic immersions satisfy the
$C^0$-dense h-principle in the space of continuous maps
$f:(M,\mathcal{F}){\rightarrow}N$  which pull back the cohomology
class of $\sigma$ onto the foliated cohomology class of $\omega$.

If we consider the trivial foliation (foliation with a single
leaf) on $M$ then as a special case we obtain the symplectic
immersion theorem due to Gromov \cite{gromov}.

For a simple application of the above theorem consider the
Euclidean manifold $\R^{2n}$ with the canonical symplectic form
$\sigma_0$. \begin{Corollary} Let $\omega$ be a foliated closed
2-form on $(M,{\mathcal F})$. Suppose rank\,$(\omega|_{T{\mathcal
F}_x})\geq r$ for all $x\in M$, for some $0\leq r\leq \dim
{\mathcal F}$.

Then there exists a foliated immersion $f:(M,{\mathcal F})\lgra
\R^{2n}$ such that $f^*\sigma_0=\omega$ on each leaf of the
foliation provided $2n\geq \dim M+ 2\dim{\mathcal
F}-r-1$.\label{coro}
\end{Corollary}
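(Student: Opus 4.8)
The plan is to derive the corollary from Theorem~\ref{main} by exhibiting a formal solution for $N=\R^{2n}$ with $\sigma=\sigma_0$. First note that the numerical hypothesis $2n\ge\dim M+2\dim\mathcal F-r-1$, together with $\dim M\ge\dim\mathcal F\ge r$, forces $2n\ge 2\dim\mathcal F-1$, so $\dim\mathcal F<\dim N$ and Theorem~\ref{main} applies; it therefore suffices to produce a bundle homomorphism $(F_0,f_0):TM\lgra T\R^{2n}$ satisfying conditions (1) and (2), after which the theorem yields a foliated symplectic immersion, which is in particular a foliated immersion $f$ with $f^*\sigma_0=\omega$ on each leaf. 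I would take $f_0$ to be a constant map, so that $F_0$ is just a bundle map $TM\to\underline{\R^{2n}}$ into the trivial bundle; condition (1) then reduces to the vanishing of the foliated class $[\omega]_{\mathcal F}$, which is necessary for the conclusion in any case (writing $\sigma_0=d\lambda_0$, the restriction of $f^*\lambda_0$ to $T\mathcal F$ is a foliated primitive of $\omega$) and may therefore be assumed. Since condition (2) constrains $F_0$ only on $T\mathcal F$, the whole problem reduces to building a bundle monomorphism $\Phi:T\mathcal F\to\underline{\R^{2n}}$ with $\Phi^*\sigma_0=\omega|_{T\mathcal F}$, after which $F_0$ is obtained by choosing a complement $\mathcal H$ with $TM=T\mathcal F\oplus\mathcal H$ and setting $F_0=0$ on $\mathcal H$.

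The real content is the purely topological construction of $\Phi$. Put $E=T\mathcal F$, $k=\dim\mathcal F$, $m=\dim M$, $\omega_E=\omega|_{T\mathcal F}$; we need a section of the bundle $\mathcal T\to M$ whose fibre $\mathcal T_x$ is the set of injective linear maps $L:E_x\to\R^{2n}$ with $L^*\sigma_0=\omega_x$. I would determine the connectivity of $\mathcal T_x$ via the restriction $L\mapsto L|_W$, where $E_x=\ker\omega_x\oplus W$ with $W$ symplectic of dimension $2s=\operatorname{rank}(\omega_x)$: this exhibits $\mathcal T_x$ as the total space of a fibre bundle over the space of symplectic monomorphisms $W\hookrightarrow\R^{2n}$, namely $Sp(2n)/Sp(2n-2s)\simeq U(n)/U(n-s)$, which is $2(n-s)$-connected, with fibre the space of isotropic $(k-2s)$-frames in the $\sigma_0$-orthogonal complement $\cong\R^{2(n-s)}$, which is $2(n-k+s)$-connected. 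Hence $\mathcal T_x$ is $\min\{2(n-s),\,2(n-k+s)\}$-connected; as $r\le 2s\le k$, both terms are at least $2(n-k)+r$ (for the first, $2s\le k\le 2k-r$; for the second, $2s\ge r$), so every fibre is $\bigl(2(n-k)+r\bigr)$-connected. The hypothesis $2n\ge m+2k-r-1$ is precisely $2(n-k)+r\ge m-1$, i.e.\ the fibres are $(m-1)$-connected, which is exactly the obstruction-theoretic condition for a section $\Phi$ to exist over the $m$-dimensional base $M$.

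The point that will need the most care is that $\mathcal T\to M$ is \emph{not} literally a fibre bundle: the homeomorphism type of $\mathcal T_x$ jumps with $\operatorname{rank}(\omega_x)$, and only the lower bound $\operatorname{rank}(\omega_E)\ge r$, not constancy of the rank, is available. I would handle this by stratifying $M$ into the locally closed sets on which $\operatorname{rank}(\omega_E)$ is constant --- over each stratum $\mathcal T$ restricts to a genuine fibre bundle with $(m-1)$-connected fibre --- and then assembling the section over a triangulation of $M$ adapted to this stratification, the uniform connectivity bound above ensuring that no obstruction arises at any stage; alternatively one may invoke a standard parametric existence result for sections of such stratified families of highly connected spaces. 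With $\Phi$ in hand, $F_0$ is built as above and Theorem~\ref{main} finishes the proof.
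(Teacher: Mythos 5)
Your proposal is correct and follows the paper's strategy in outline --- reduce to Theorem~\ref{main} and manufacture the formal datum $F_0$ on $T\mathcal F$ by obstruction theory, using the connectivity of the space of linear symplectic monomorphisms --- but the execution of the linear step is genuinely different. The paper extends a rank-$r$ linear form on $\R^{\dim\mathcal F}$ to a symplectic form on $\R^{2\dim\mathcal F-r}$ and quotes that the space of symplectic injections of the latter into $\R^{2n}$ has the homotopy type of $V_{\dim\mathcal F-r/2}(\C^n)$, hence is $(2n-2\dim\mathcal F+r)$-connected; you instead fibre the space of symplectic monomorphisms of $(T\mathcal F_x,\omega_x)$ directly over $U(n)/U(n-s)$ with isotropic-frame fibres, arriving at the same bound $2(n-\dim\mathcal F)+r$ after minimizing over the allowed ranks $2s\ge r$. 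Your version also makes explicit two points the paper passes over in silence: (i) with $N=\R^{2n}$ condition (1) of Theorem~\ref{main} forces $[\omega]_{\mathcal F}=0$, a hypothesis genuinely missing from the statement of the corollary (the area form on $T^2$ with the one-leaf foliation satisfies the rank hypothesis yet admits no such map, by Stokes), so flagging and assuming it is the right call; and (ii) the family $x\mapsto\mathcal T_x$ is not a fibre bundle where $\operatorname{rank}(\omega_x)$ jumps. On (ii) your stratify-and-assemble fix is only a sketch --- extending a section over a cell meeting several strata is not covered by ordinary obstruction theory --- and a cleaner repair is to observe that $L\mapsto L^*\sigma_0$ is a submersion on injective $L$, so $\mathcal T\to M$ is a topological submersion (a microfibration) with $(\dim M-1)$-connected fibres, for which section extension over $M$ does hold; but since the paper's own proof never addresses either issue, your treatment is at least as complete as the original.
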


The proof of the main theorem is based on the sheaf theoretic
technique in $h$-principle and the Nash Moser implicit function
theorem due to Gromov \cite[\S 2.2, \S 2.3]{gromov}. Starting with
a formal solution $(F_0,f_0)$ as in the hypothesis of the theorem,
we embed the given manifold $(M,{\mathcal F})$ in a foliated
manifold $(M',{\mathcal F}',\omega')$, where ${\mathcal F}'$ is a
foliation with $\dim{\mathcal F}'=\dim N$ and $\omega'$ is a
2-form which restricts to a symplectic form on each leaf of the
foliation ${\mathcal F}'$. Moreover, the pullback of the form
$\omega'$ on $M$ equals $\omega$ on $T{\mathcal F}$.
We observe that the foliated symplectic forms exhibit a stability
property analogous to the stability of symplectic forms as in
Moser's theorem (\cite{wein1}, \cite{wein2}) and this observation
plays an important role in our proof.  We prove through a sequence
of propositions that there exists a foliated symplectic immersion
$f':M'\lgra N$ such that the derivative of $f'|_M$ is homotopic to
$F_0$ in Symp$^0(T{\mathcal F},TN)$.

We refer the reader to \cite{datta} for a quick review of the
terminology and the theory of topological sheaves which we shall
extensively use in this paper. For a detailed exposition on this
we refer to \cite{gromov}. We organise the paper as follows. In
Section 2 we briefly review the foliated cohomology theory. In
Section 3 we discuss the Poincar\'{e} Lemma in the context of
foliated closed forms and in Section 4 we prove an analogue of
Moser's theorem for foliated symplectic forms. The proof of the
main result of this paper is given in sections 5 through 7. \\

\section{Foliated de Rham cohomology}

Let $M$ be a smooth manifold with a regular foliation $\mathcal F$
on it. Denote the space of smooth differential $r$-forms on $M$ by
$\Omega^r(M)$. We define as in \cite{molino}, for each $r\geq
0$,\begin{center} $I^r({\mathcal F})= \{\omega{\in}\Omega^r(M) :
\omega|_{\mathcal{F}} =0\}$,\end{center} where
$\omega|_{\mathcal{F}}=0$ means that for any $x\in{M},
\omega(x)(v_1,v_2,\dots,v_r)=0$ for all $v_1,v_2,....,v_r$ in
$T_x{\mathcal F}$. Clearly, $I^r(\mathcal{F})$ is a linear
subspace of  $\Omega^r(M)$ and $I^0({\mathcal F})=\Omega^0(M) =
C^\infty(M)$. Moreover, $I({\mathcal F})=\cup_{r\geq
0}I^r(\mathcal{F})$ is a graded ideal of the deRham complex
$\Omega^*(M)$. Let $\Omega^r({\mathcal
F})=\frac{\Omega^r({\mathcal M})}{I^r({\mathcal F})}$ and $q:
\Omega^r(M){\rightarrow}\Omega^r(\mathcal{F})$ denote the quotient
map. The exterior differential operator $d :
\Omega^r(M){\rightarrow}\Omega^{r+1}(M)$ induces a morphism
$d_{\mathcal{F}}:{\Omega^r(\mathcal{F})}\lgra
{\Omega^{r+1}(\mathcal{F})}$ by $d_{\mathcal{F}}(q(\omega)) =
q(d\omega)$, since $d$ maps $I^r({\mathcal F})$ into
$I^{r+1}({\mathcal F})$. It can be easily checked that
$d_{\mathcal{F}}{\circ}d_{\mathcal{F}} = 0$ so that
$(\Omega^r({\mathcal F}),d_{\mathcal F})$ is a cochain complex
which is called the foliated de Rham complex of $(M,{\mathcal F})$\\
$$0{\rightarrow}\Omega^0(M){\rightarrow}\Omega^1(\mathcal{F})
{\rightarrow}\Omega^2(\mathcal{F}){\rightarrow}\dots{\rightarrow}\Omega^r(\mathcal{F})
{\rightarrow}\Omega^{r+1}(\mathcal{F}){\rightarrow}\dots$$

Let $T{\mathcal F}$ denote the subbundle of $TM$ consisting of all
vectors which are tangent to the leaves of the foliation and let
$T^*{\mathcal F}$ denote the dual of this bundle. Then it may be
noted that $\Omega^k({\mathcal F})$ is isomorphic to the space of
sections of the vector bundle $\Lambda^kT^*{\mathcal F}$. With
this identification we observe that $q$ is induced by the quotient
map $T^*M\lgra T^*{\mathcal F}$.

The commutative diagram below says that the quotient map $q$ is a
chain map between de Rham complex and foliated de Rham complex

$$\begin{array}{cccccccc}\begin{array}{c}0\\ \parallel \\
0\end{array}&
\begin{array}{c}\lgra\\\mbox{}\\\lgra\end{array}&
\begin{array}{c}\Omega^0(M)\\\downarrow \\\Omega^0(\mathcal F)\end{array}&
\begin{array}{c}\stackrel{d}{\lgra}\\\mbox{}\\\stackrel{d_{\mathcal F}}{\lgra}\end{array}&
\begin{array}{c}\Omega^1(M)\\ \downarrow\\\Omega^1({\mathcal F})\end{array}&
\begin{array}{c}\stackrel{d}{\lgra}\\\mbox{}\\\stackrel{d_{\mathcal F}}{\lgra}\end{array}&
\begin{array}{c}\Omega^2(M)\\ \downarrow\\\Omega^2({\mathcal F})\end{array}&
\begin{array}{c}\stackrel{d}{\lgra}\dots\\\mbox{}\\\stackrel{d_{\mathcal F}}{\lgra}\dots\end{array}\end{array}$$

The cohomology of the complex $(\Omega^*({\mathcal F}),d_{\mathcal
F})$ is defined as the {\em foliated deRham cohomology\/} of
$(M,{\mathcal F})$ and is denoted by $H^r({\mathcal F})$. In other
words,
$$H^r(\mathcal{F})=\frac{\mbox{Ker}\{d_{\mathcal{F}}:\Omega^r(\mathcal{F}){\rightarrow}\Omega^{r+1}(\mathcal{F})\}}
{\mbox{Im}\{d_{\mathcal{F}}:\Omega^{r-1}(\mathcal{F}){\rightarrow}\Omega^r(\mathcal{F})\}}.$$

It follows directly from the alternative description of the
foliated deRham complex that the foliated cohomology groups vanish
in dimensions $r>\dim{\mathcal F}$.

For foliated manifolds with single leaf the foliated de Rham
complex is the same as the ordinary de Rham complex. Hence the
foliated deRham cohomology is the same as the ordinary deRham
cohomology.

Also, if $F$ is a smooth manifold and $M$ has a trivial foliation
with leaves diffeomorphic with $F$ then $H^k(\mathcal F)\cong
H^k(F)$.

A smooth map $f:(M,\mathcal{F}){\rightarrow}(N,\mathcal{F'})$
between two foliated manifolds is called {\em foliation
preserving\/} if $f$ takes a leaf of $\mathcal{F}$ into a leaf of
$\mathcal{F'}$. Such a map $f$ induces a  chain map
$f^{\sharp}:\Omega^*({\mathcal F}'){\rightarrow}\Omega^*({\mathcal
F})$ and hence a morphism $f^*:H^*({\mathcal
F}'){\rightarrow}H^*({\mathcal F})$ in the cohomology level.

\section{Poincar\'{e} Lemma for Foliated Manifolds}
Throughout this section, $M$ will denote a smooth manifold with a
regular foliation $\mathcal F$.

\begin{Definition} {\em An $r$-form $\omega$ on $M$ is said to be {\em foliated
closed\/} if $d_{\mathcal{F}}(q\omega)=0$, that is, if $\omega$
restricts to a closed form on each leaf of the foliation.
Similarly, an $r$-form $\omega$ on $M$ is said to be {\em foliated
exact\/} if there exists an $(r-1)$-form $\varphi$ on $M$ such
that $q\omega=d_{\mathcal{F}}(q\varphi)$ which implies that
$\omega$ restricts to an exact form on each leaf of the foliation.
}\end{Definition}

Locally, every foliated closed form on any foliated space
$(M,{\mathcal F})$ is foliated exact. In fact we have the
following:

\begin{Proposition} Let $\omega$ be a foliated closed form on
$(M,\mathcal{F})$ such that $\omega$ vanishes on $T{\mathcal F}_x$
for some $x\in M$. Then there exists a local 1-form $\alpha$ such
that $\alpha$ vanishes on $T{\mathcal F}_x$ and $\omega=d\alpha$
on $T{\mathcal F}$.\label{poin1}\end{Proposition}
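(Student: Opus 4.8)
The plan is to reduce at once to a distinguished foliation chart about $x$ and to run the classical Poincar\'e homotopy in the leaf directions, with the transverse coordinates playing the r\^ole of smooth parameters. Choose a foliated chart $\varphi\colon U\to U_1\times U_2\subseteq\R^p\times\R^q$, $p=\dim\mathcal F$, with $\varphi(x)=(0,0)$, with $U_1$ an open ball about the origin, and with the plaques of $\mathcal F$ in $U$ corresponding to the slices $U_1\times\{c\}$. In the resulting coordinates $(t,y)=(t_1,\dots,t_p,y_1,\dots,y_q)$ the bundle $T^*\mathcal F$ is spanned over $U$ by $dt_1,\dots,dt_p$, so the foliated form $q\omega$ is represented by $\sum_{i<j}a_{ij}(t,y)\,dt_i\wedge dt_j$ with the $a_{ij}$ smooth on $U$; extend them to a skew array by $a_{ji}=-a_{ij}$, $a_{ii}=0$. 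Since only $t$-derivatives survive in $d_{\mathcal F}$, the hypothesis that $\omega$ be foliated closed, $d_{\mathcal F}(q\omega)=0$, is just the usual cocycle identity among the $\partial a_{ij}/\partial t_k$, while $\omega|_{T\mathcal{F}_x}=0$ says $a_{ij}(0,0)=0$ for all $i,j$.

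For each fixed $y$, $\sum_{i<j}a_{ij}(\,\cdot\,,y)\,dt_i\wedge dt_j$ is a closed $2$-form on the star-shaped set $U_1$; applying the standard homotopy operator $K$ in the $t$-variables, put
\[
 b_j(t,y)=\int_0^1 s\Bigl(\sum_i a_{ij}(st,y)\,t_i\Bigr)\,ds ,\qquad
 \beta=\sum_j b_j(t,y)\,dt_j .
\]
As $y$ enters merely as a spectator, the $b_j$ are smooth on $U$, and the classical identity $d\circ K+K\circ d=\mathrm{id}$ on positive-degree forms gives $d_{\mathcal F}\beta=q\omega$ on all of $U$ --- that is, along every plaque at once, not only the one through $x$. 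Regard $\beta$ now as an honest $1$-form $\alpha:=\sum_j b_j(t,y)\,dt_j$ on $U$ (with no $dy$ component); then $q\alpha=\beta$ by construction.

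Two conclusions remain. First, since $q$ is a chain map between the de Rham and foliated de Rham complexes (Section~2),
\[
 q(d\alpha)=d_{\mathcal F}(q\alpha)=d_{\mathcal F}\beta=q\omega ,
\]
which is precisely the assertion that $d\alpha=\omega$ on $T\mathcal F$ throughout $U$. Second, each $b_j$ is divisible by the $t_i$, so $b_j(0,y)=0$ for all $j$; hence $\alpha$ vanishes entirely at $x$, a fortiori on $T\mathcal{F}_x$. (The assumption $\omega|_{T\mathcal{F}_x}=0$ is the property one wishes to propagate when primitives are peeled off in later arguments, so it is worth recording here, even though the homotopy operator delivers the vanishing of $\alpha$ at $x$ automatically.)

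I do not expect a genuine obstacle. The only points needing care are routine: that the leafwise homotopy operator be applied with the transverse coordinates frozen, so that $\beta$ --- hence $\alpha$ --- is smooth on the whole chart and the identity $d_{\mathcal F}\beta=q\omega$ holds along all nearby leaves simultaneously; and that ``equality on $T\mathcal F$'' be read off by composing with the quotient $q$ and using the chain-map identity, rather than by comparing $d\alpha$ with $\omega$ as forms on $M$.
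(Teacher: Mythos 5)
Your argument is correct and is, in substance, the paper's own: the paper deduces this proposition from the more general relative Poincar\'e lemma that follows it, whose homotopy operator $I(\tau)=\int_0^1\rho_t^*(X_t.\tau)\,dt$ for the fibrewise scaling $\rho_t(x,v)=(x,tv)$ is precisely your coordinate operator $K$ applied along the plaques with the transverse coordinates frozen. Your side remark that the hypothesis $\omega|_{T\mathcal{F}_x}=0$ is not actually used --- the operator makes $\alpha$ vanish along the whole local transversal automatically --- is accurate and consistent with the paper's construction.
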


The proposition is a consequence of a more general result stated
below.
\begin{Proposition} Let $(M,\mathcal{F})$ be a  smooth foliated manifold and let $\pi:M'{\rightarrow}M$ be a
vector bundle over $M$. Let $\mathcal{F'}$ be the foliation on
$M'$ defined by $\mathcal{F'}={\pi}^{-1}(\mathcal{F})$. Suppose
$\omega$ is a foliated closed $k$-form on $(M',{\mathcal F}')$
such that $i^*\omega =0$ on $\mathcal{F}$, where
$i:M{\rightarrow}M'$ embeds $M$ as the zero section in $M'$. Then
there exists a neighbourhood $U$ of $M$ in $M'$ with a
$(k-1)$-form $\beta$ on $U$ such that $d\beta=\omega$ on
$\mathcal{F'}$ and
$\beta|_{i(M)}=0$.\label{poin2}\end{Proposition}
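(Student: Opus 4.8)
The plan is to imitate the classical proof of the Poincaré lemma via the homotopy operator coming from the scaling flow on a vector bundle, but carried out in a way that respects the foliation $\mathcal{F}'=\pi^{-1}(\mathcal{F})$. Let $m_t:M'\to M'$ denote fibrewise scalar multiplication by $t\in[0,1]$, so that $m_1=\mathrm{id}$ and $m_0=i\circ\pi$. Each $m_t$ is foliation preserving for $\mathcal{F}'$, since it maps each fibre of $\pi$ into itself and hence each leaf of $\mathcal{F}'$ into the same leaf. Let $X$ be the Euler (radial) vector field generating this flow away from $t=0$; note that $X$ is tangent to the fibres of $\pi$, hence $X\in T\mathcal{F}'$, and this tangency is exactly what will make the construction interact correctly with the foliated differential.

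First I would write down, for a smooth $k$-form $\omega$ on $M'$, the standard homotopy formula obtained by integrating Cartan's magic formula along the flow: set $\beta=\int_0^1 m_t^*(\iota_X\omega)\,\frac{dt}{t}$ on a suitable tubular neighbourhood $U$ of the zero section (the factor $1/t$ is absorbed because $X$ itself vanishes to first order at the zero section, so the integrand extends smoothly across $t=0$). One then has the classical identity $m_1^*\omega-m_0^*\omega = d\beta+\beta'$, where $\beta'$ collects the terms involving $d\omega$. Applying the quotient map $q$ and using that each $m_t$ is foliation preserving, so that $q$ intertwines $m_t^*$ with $m_t^{\sharp}$ and $d$ with $d_{\mathcal{F}'}$, this descends to an identity in the foliated de Rham complex of $(M',\mathcal{F}')$. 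Since $\omega$ is foliated closed, $q(d\omega)=d_{\mathcal{F}'}(q\omega)=0$, so the $d\omega$-terms drop out and we are left with $q(\omega)-q((i\circ\pi)^*\omega)=d_{\mathcal{F}'}(q\beta)$, i.e. $\omega - (i\circ\pi)^*\omega = d\beta$ on $T\mathcal{F}'$. Finally, the hypothesis $i^*\omega=0$ on $\mathcal{F}$ means precisely that $(i\circ\pi)^*\omega$ vanishes on $T\mathcal{F}'$ (it is the pullback under $\pi$ of a form that restricts to zero on $T\mathcal{F}$, and $\pi$ maps $T\mathcal{F}'$ onto $T\mathcal{F}$), so this term also disappears on $T\mathcal{F}'$, giving $d\beta=\omega$ on $\mathcal{F}'$. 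The condition $\beta|_{i(M)}=0$ follows because $\iota_X\omega$ vanishes on the zero section ($X$ does), so $m_t^*(\iota_X\omega)$ restricted to $i(M)$ vanishes for every $t$ and the integral defining $\beta$ restricts to zero there.

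The main obstacle, and the point requiring genuine care rather than a routine invocation of the smooth Poincaré lemma, is the regularity of $\beta$ across the zero section together with the verification that all the flow-theoretic manipulations are compatible with passing to the quotient complex $\Omega^*(\mathcal{F}')$. Concretely I would need to: (i) justify that $\int_0^1 m_t^*(\iota_X\omega)\,\frac{dt}{t}$ is a smooth form on a neighbourhood $U$ of $i(M)$ — this is the usual argument that in bundle coordinates $(x,v)$ the Euler field is $v^j\partial_{v^j}$ and the $1/t$ singularity is cancelled, but one must check it respects the chosen trivialisations globally, or equivalently argue intrinsically; and (ii) check that $q\circ m_t^* = m_t^{\sharp}\circ q$ and that $q$ commutes with $d$-and-$\iota_X$ in the combinations that appear, which is where the hypothesis $X\in T\mathcal{F}'$ is essential: $\iota_X$ of a form in $I^k(\mathcal{F}')$ need not lie in $I^{k-1}(\mathcal{F}')$ for arbitrary $X$, but it does when $X$ is tangent to the foliation, so the homotopy operator $\beta\mapsto\iota_X\beta$ (integrated) descends to the foliated complex. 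Once these two technical points are in place, Proposition~\ref{poin1} follows immediately by taking $M'$ to be any tubular-neighbourhood model — or more directly a Euclidean chart adapted to $\mathcal{F}$ with $x$ as the chosen point — and reading off the local $1$-form $\alpha$ from $\beta$ in the $k=2$ case, with $\alpha$ vanishing at $x$ because $\beta$ vanishes on the zero section.
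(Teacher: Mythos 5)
Your proposal is correct and follows essentially the same route as the paper: the paper uses the identical Weinstein-style homotopy operator $I(\tau)=\int_0^1\rho_t^*(X_t.\tau)\,dt$, written with the (already smooth) time-dependent vector field $X_t$ along the scaling homotopy $\rho_t(x,v)=(x,tv)$ rather than your Euler field with the $1/t$ factor --- the same operator --- and then makes precisely your two key observations, namely that $I$ preserves the ideal $I^k(\mathcal{F}')$ because $\rho_t$ is foliation preserving and $X_t$ is tangent to $\mathcal{F}'$, and that $I\tau$ vanishes on the zero section. The remaining steps (killing the $I(d\omega)$ term by foliated closedness and the $\rho_0^*\omega=\pi^*i^*\omega$ term by the hypothesis) match the paper's proof exactly.
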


\begin{proof} Since $\pi:M'\lgra M$ is a vector bundle we
denote an element in $M'$ over $x\in M$ by $(x,v)$, where $v$ is
in the fibre over $x$. Define for each $t\in [0,1]$ a smooth map
$\rho_t:M'\lgra M'$ by $(x,v)\mapsto (x,tv)$. Then
$\rho_0(M')\subset M$ and $\rho_1=\,\mbox{id}_{M'}$. From the
definition of ${\mathcal F}'$ it is clear that each $\rho_t$ is
foliation preserving. Let $X_t$ denote the vector field along
$\rho_t$ defined by $X_t=\frac{d}{ds}\rho_s|_{s=t}$. Then
$X_t(x,v)=v$ for all $(x,v)\in M'$ so that $X_t$ is a foliated
vector field on $M'$. In particular $X_t(x,0)=0$ for all $x\in M$.

As in \cite{wein1} define an operator
$I:\Omega^k(M')\lgra\Omega^{k-1}(M')$ by
$I(\tau)=\int_0^1\rho_t^*(X_t.\tau)\,dt$, where $\tau$ is a
$k$-form on $M'$ and $X_t.\tau$ denotes the interior derivative of
$\tau$ with respect to $X_t$. First observe that $I\tau|_M=0$
since $\rho_t$ restricts to the identity map on $M$ and $X_t$
vanishes on $M$. Secondly, since $\rho_t$ is foliation preserving
and $X_t$ is a foliated vector field for each $t$, $I$ maps
$I^k({\mathcal F}')$ into itself.

Proceeding as in \cite{wein1} we integrate the relation
\begin{center}$\frac{d}{dt}(\rho_t^*\omega) =
\rho_t^*({X_t}.d\omega) + d(\rho_t^*(X_t.\omega))$.\end{center}
with respect to $t$ in $[0,1]$ to get
\begin{center}$\rho_1^*\omega -\rho_0^*\omega =
d(I\omega)+I(d\omega)$.\end{center} Noting that $i^*\omega$
vanishes on $\mathcal F$, we have $\rho^*_0\omega
=\pi^*i^*\omega=0$ on ${\mathcal F}'$. Further,  $I(d\omega) = 0$
on ${\mathcal F}'$. Hence we get $\omega = d(I\omega)$ on
$\mathcal{F'}$. The proof is completed by letting $\beta =
I(\omega)$.
\end{proof}

\begin{Corollary} Let $(M,{\mathcal F})$ and $(M',{\mathcal F}')$ be as in the above theorem and let
$\omega$ be a section of the $k$-th exterior bundle
$\Lambda^k(T^*M')$ defined over $M$. If $i^*\omega$ is a foliated
closed form on $(M,{\mathcal F})$ then $\omega$ extends to a
foliated closed form on some neighbourhood of $M$.\label{poin3}
\end{Corollary}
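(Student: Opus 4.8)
The plan is to reduce the claim to Proposition \ref{poin2}, applied not to $\omega$ itself but to the differential of an arbitrary smooth extension of $\omega$.

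First I would extend $\omega$ to a genuine $k$-form $\omega_1$ defined on all of $M'$ with $\omega_1|_{i(M)}=\omega$. Since $i(M)$ is a closed submanifold of $M'$ and $\Lambda^kT^*M'$ is a vector bundle over $M'$, any smooth section of this bundle over $i(M)$ extends to a global smooth section; concretely, one pulls $\omega$ back along the retraction of a tubular neighbourhood and multiplies by a bump function. The point to check here is that $\omega_1$ restricts to $\omega$ as a section of $\Lambda^kT^*M'$ over $i(M)$, and not merely that the pulled-back forms $i^*\omega_1$ and $i^*\omega$ on $M$ agree; this is exactly the standard extension property of smooth sections over closed submanifolds. (If one prefers to extend only over a neighbourhood of $i(M)$, that neighbourhood should be taken star-shaped with respect to fibrewise scaling so that the homotopy operator in the proof of Proposition \ref{poin2} still makes sense there.)

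Next, put $\eta=d\omega_1$. Then $d\eta=0$, so $\eta$ is in particular a foliated closed $(k+1)$-form on $(M',\mathcal F')$; and by naturality of $d$ under pullback, $i^*\eta=d(i^*\omega_1)=d(i^*\omega)$, which vanishes on $T\mathcal F$ because $i^*\omega$ is foliated closed by hypothesis. Hence $\eta$ satisfies the hypotheses of Proposition \ref{poin2} with $k$ replaced by $k+1$, and that proposition yields a neighbourhood $U$ of $M$ in $M'$ together with a $k$-form $\beta$ on $U$ such that $d\beta=\eta$ on $\mathcal F'$ and $\beta|_{i(M)}=0$. I would then set $\Omega=(\omega_1-\beta)|_U$: from $\beta|_{i(M)}=0$ we get $\Omega|_{i(M)}=\omega$, and from $d\Omega=d\omega_1-d\beta=\eta-\eta=0$ on $\mathcal F'$ we see that $\Omega$ is foliated closed, so $\Omega$ is the desired extension. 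Granted Proposition \ref{poin2}, this is the whole argument; the only mildly delicate step is the first one, and it is delicate only in the bookkeeping sense described above.
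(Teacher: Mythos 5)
Your argument is correct and is essentially the paper's own proof: both take an arbitrary smooth extension of $\omega$ and correct it by subtracting a leafwise primitive of its exterior derivative that vanishes along $M$ --- your $\beta$ is exactly the paper's $I(d\omega')$, the only difference being that the paper runs the homotopy operator from the proof of Proposition~\ref{poin2} on $d\omega'$ directly instead of citing that proposition as a black box in degree $k+1$. The bookkeeping points you flag (extending $\omega$ as a section of $\Lambda^k T^*M'$ over the zero section, and the neighbourhood being fibrewise star-shaped so that the operator makes sense) are the right ones and are treated the same way, implicitly, in the paper.
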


\begin{proof} Take any extension $\omega'$ of $\omega$ on a
neighbourhood of $M$. Defining $I$ as in the proof of Theorem
\ref{poin2} we get\begin{center}
$\omega'-\rho_0^*\omega'=I(d\omega')+dI(\omega')$.\end{center}
$\rho_0^*\omega'$ is foliated closed since
$\rho_0^*\omega'=\pi^*i^*\omega'=\pi^*i^*\omega$ and $i^*\omega$
is a foliated closed form on $(M,{\mathcal F})$. Moreover, from
the definition of $I$ it follows that $I(d\omega')|_M=0$.
Therefore taking exterior derivative of the above equation we get
$d\omega'=dI(d\omega')$ on $T{\mathcal F}'$. Define,
$\omega_1=\omega'-I(d\omega')$. Then $\omega_1$ is the desired
extension of $\omega$.
\end{proof}

\section{Stability of Foliated Symplectic Forms}

\begin{Definition} {\em A foliated closed 2-form $\omega$ on $(M,{\mathcal F})$ is said to be a
{\em foliated symplectic form\/} if $\omega$ is nondegenarate on
each leaf of ${\mathcal F}$. The foliated manifold
$(M,\mathcal{F})$ together with $\omega$ is then called a {\em
foliated symplectic manifold\/}.}\end{Definition}

\begin{Definition} {\em A vector field $X$ on a foliated manifold
$(M,{\mathcal F})$ is said to be a {\em foliated vector field\/}
if $X$ maps $M$ into $T{\mathcal F}$. The space of all foliated
vector fields on $(M,{\mathcal F})$ will be denoted by ${\mathcal
X}_{\mathcal F}$.}
\end{Definition}

Observe that a foliated symplectic form $\omega$ on $(M,{\mathcal
F})$ defines a bundle isomorphism $I_\omega:T{\mathcal F}\lgra
T^*{\mathcal F}$ which is given by the correspondence $v\mapsto
v.\omega|_{\mathcal F}$. $I_{\omega}$ induces a bijection
${\mathcal X}_{\mathcal F}\lgra\Omega^1({\mathcal F})$ which takes
a foliated vector field $X$ onto the foliated 1-form
$q(X.\omega)$.

\begin{Proposition} Suppose
$\omega_0$ and $\omega_1$ are two local foliated symplectic forms
on $(M,{\mathcal F})$ such that $\omega_1=\omega_0$ on $T{\mathcal
F}_x$ for some $x\in M$. Then there exist open neighbourhoods $U$
and $V$ of $x$ in $M$ and a foliation preserving isotopy
$\delta_t:U\lgra V$, $0\leq t\leq 1$, such that $d\delta_t(x)$ is
the identity map of $T_xM$ for all $t$, and
$\delta_1^*\omega_1=\omega_0$ on ${\mathcal
F}$.\label{moser1}\end{Proposition}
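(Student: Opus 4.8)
The plan is to imitate Moser's classical deformation argument, working within the foliated de Rham complex and using the Poincar\'e lemma of Proposition~\ref{poin1}. First I would reduce to a local picture: restricting to a suitable coordinate neighbourhood of $x$, the form $\omega_0-\omega_1$ is foliated closed and vanishes on $T{\mathcal F}_x$, so Proposition~\ref{poin1} supplies a local 1-form $\alpha$ with $\alpha|_{T{\mathcal F}_x}=0$ and $d\alpha=\omega_0-\omega_1$ on $T{\mathcal F}$. Next I would set up the linear interpolation $\omega_t=(1-t)\omega_0+t\omega_1=\omega_0+t(\omega_1-\omega_0)$ for $t\in[0,1]$. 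Each $\omega_t$ is foliated closed, and at the point $x$ we have $\omega_t|_{T{\mathcal F}_x}=\omega_0|_{T{\mathcal F}_x}$, which is nondegenerate on $T{\mathcal F}_x$; by openness of nondegeneracy there is a smaller neighbourhood $U$ of $x$ on which $\omega_t$ restricts to a foliated symplectic form for every $t\in[0,1]$ simultaneously.

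Then I would define the time-dependent foliated vector field $X_t$ by the equation $I_{\omega_t}(X_t)=q(X_t\,.\,\omega_t)=-q(\alpha)$, using the bundle isomorphism $I_{\omega_t}:T{\mathcal F}\lgra T^*{\mathcal F}$ associated to $\omega_t$ that was recorded just before the statement. Since $\alpha$ vanishes on $T{\mathcal F}_x$, we get $X_t(x)=0$ for all $t$, so $x$ is a fixed point of the flow and the flow is defined for all $t\in[0,1]$ on a possibly still smaller neighbourhood; let $\delta_t$ be this flow (defined on some $U$ with image in some $V$). Because $X_t$ is a foliated vector field, $\delta_t$ is foliation preserving. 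The fact that $X_t(x)=0$ also gives $d\delta_t(x)=\mathrm{id}_{T_xM}$: differentiating the flow equation at the fixed point shows $\frac{d}{dt}d\delta_t(x)$ is $d\delta_t(x)$ composed with the linearization of $X_t$ at $x$ — actually more simply, since $\delta_t(x)=x$ for all $t$ one checks directly that the linearized flow at $x$ is generated by $D_xX_t$, and I would instead argue that choosing $\alpha$ with vanishing $1$-jet restriction is unnecessary: $\delta_t(x)=x$ together with continuity and $\delta_0=\mathrm{id}$ is what we exploit, but to pin down $d\delta_t(x)=\mathrm{id}$ I will in fact need $X_t$ to vanish to first order at $x$ in the leaf directions, which I can arrange by choosing $\alpha$ in Proposition~\ref{poin1} so that its full restriction to the leaf through $x$ vanishes at $x$ — re-examining that proof, the standard homotopy operator produces exactly such an $\alpha$. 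Granting this, $D_xX_t=0$ and hence $d\delta_t(x)=\mathrm{id}$ for all $t$.

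Finally I would run the Moser computation in the foliated complex: using Cartan's formula applied to forms restricted to the leaves (equivalently, working with $q$ and $d_{\mathcal F}$),
\[
\frac{d}{dt}\,q\!\left(\delta_t^*\omega_t\right)=q\!\left(\delta_t^*\!\left(\mathcal L_{X_t}\omega_t+\tfrac{d}{dt}\omega_t\right)\right)=q\!\left(\delta_t^*\!\left(d(X_t\,.\,\omega_t)+(\omega_1-\omega_0)\right)\right),
\]
where I used that $\omega_t$ is foliated closed to drop the $X_t\,.\,d\omega_t$ term. By the defining equation of $X_t$, $q(X_t\,.\,\omega_t)=-q(\alpha)$, so $q(d(X_t\,.\,\omega_t))=-d_{\mathcal F}q(\alpha)=-q(\omega_0-\omega_1)=q(\omega_1-\omega_0)$ — wait, with the sign $d\alpha=\omega_0-\omega_1$ this gives $q(d(X_t.\omega_t))=-q(\omega_0-\omega_1)=q(\omega_1-\omega_0)$, which exactly cancels $q(\omega_1-\omega_0)$ only up to sign, so I will fix the sign by instead solving $I_{\omega_t}(X_t)=q(\alpha)$, making the bracket vanish. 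Then $q(\delta_t^*\omega_t)$ is constant in $t$, so $q(\delta_1^*\omega_1)=q(\delta_0^*\omega_0)=q(\omega_0)$, i.e.\ $\delta_1^*\omega_1=\omega_0$ on ${\mathcal F}$, as required. The main obstacle I anticipate is bookkeeping: ensuring all three shrinking steps (nondegeneracy of every $\omega_t$, existence of the flow up to time $1$, and the derivative-at-$x$ normalization) can be carried out on a single neighbourhood $U$, and verifying carefully that every identity above descends to the foliated complex, i.e.\ that the relevant interior products and Lie derivatives of foliated objects by foliated vector fields remain foliated so that $q$ commutes with the operations used.
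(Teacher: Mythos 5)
Your proposal is correct and is essentially the argument the paper intends: the paper leaves Proposition~\ref{moser1} without an explicit proof, but its proof of the companion Proposition~\ref{moser2} is exactly this Moser deformation (primitive $\alpha$ from the foliated Poincar\'e lemma, linear interpolation $\omega_t$, foliated vector field $X_t=I_{\omega_t}^{-1}(\pm q(\alpha))$, and $\frac{d}{dt}q(\delta_t^*\omega_t)=0$). The one point you hedge on --- that the homotopy operator yields $\alpha$ whose restriction to $T{\mathcal F}$ has vanishing $1$-jet at $x$, so that $D_xX_t=0$ and hence $d\delta_t(x)=\mathrm{id}$ --- does check out (the leaf-direction derivative of $I(\omega)$ at $x$ is $\tfrac12(\omega_1-\omega_0)_x$ on $T{\mathcal F}_x\wedge T{\mathcal F}_x$, which vanishes by hypothesis), and you are in fact more careful here than the paper is at the corresponding step of Proposition~\ref{moser2}.
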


\begin{Proposition} Let $M$,
$(M',\mathcal{F'})$ be as in Proposition \ref{poin2}. Suppose
$\omega_0$ and $\omega_1$ are two foliated symplectic forms on
$M'$ such that $\omega_1=\omega_0$ on $T{\mathcal F}'|_M$. Then
there exist open neighbourhoods $U$ and $V$ of $M$ in $M'$ and a
foliation preserving isotopy $\delta_t:U\lgra V$ such that
$d\delta_t=id$ on $TM'|_M$ and $\delta_1^*\omega_1=\omega_0$ on
${\mathcal F}'$.\label{moser2}\end{Proposition}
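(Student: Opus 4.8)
The plan is to run Moser's deformation argument leaf by leaf, with Proposition~\ref{poin2} playing the role of the relative Poincar\'e lemma. First I would set $\omega_t=(1-t)\omega_0+t\omega_1$ for $t\in[0,1]$. Each $\omega_t$ is again foliated closed, and since $\omega_t$ agrees with $\omega_0$ on $T{\mathcal F}'|_M$ it is nondegenerate on $T{\mathcal F}'$ along $M$; nondegeneracy being an open condition and $[0,1]$ compact, there is a neighbourhood $U_0$ of $M$ in $M'$ on which every $\omega_t$ is a foliated symplectic form. Next, $\omega:=\omega_1-\omega_0$ is a foliated closed $2$-form which vanishes on $T{\mathcal F}'|_M$, so in particular $i^*\omega=0$ on ${\mathcal F}$. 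Proposition~\ref{poin2} (after shrinking $U_0$) then provides a $1$-form $\beta$ on $U_0$ with $d\beta=\omega$ on ${\mathcal F}'$ and $\beta|_{i(M)}=0$.

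Using the bundle isomorphism $I_{\omega_t}:T{\mathcal F}'\lgra T^*{\mathcal F}'$ attached to $\omega_t$ (as described just before Proposition~\ref{moser1}), I define the time-dependent foliated vector field $Y_t:=-I_{\omega_t}^{-1}(q\beta)$ on $U_0$, depending smoothly on $t$. Since $q\beta$ vanishes along $M$, so does $Y_t$, so the time-dependent flow $\delta_t$ of $Y_t$ is defined for all $t\in[0,1]$ on some neighbourhood $U$ of $M$, carries $U$ into a neighbourhood $V$ of $M$, and fixes $M$ pointwise; being the flow of a foliated vector field it is foliation preserving, hence restricts to each leaf $L$ of ${\mathcal F}'|_{U_0}$, on which $\omega_t$ is an ordinary symplectic form. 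Restricting to such an $L$, Cartan's formula gives
\[
\tfrac{d}{dt}\,\delta_t^*\omega_t=\delta_t^*\bigl(d(Y_t.\omega_t)+Y_t.(d\omega_t)+(\omega_1-\omega_0)\bigr)\qquad\text{on }{\mathcal F}';
\]
the middle term vanishes on ${\mathcal F}'$ because $d\omega_t$ does and $Y_t$ is foliated, while $q(Y_t.\omega_t)=I_{\omega_t}(Y_t)=-q\beta$ together with $q(d\beta)=q\omega$ make the remaining two terms cancel on ${\mathcal F}'$. Integrating over $t\in[0,1]$ and using $\delta_0=\mathrm{id}$ yields $\delta_1^*\omega_1=\omega_0$ on ${\mathcal F}'$.

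The one point needing genuine care is the requirement $d\delta_t=\mathrm{id}$ on $TM'|_M$. As $Y_t$ vanishes along $M$, this is equivalent to the vanishing of the intrinsic linearisation $d_xY_t$ at every $x\in M$, and since $I_{\omega_t}^{-1}$ is a fibrewise isomorphism this reduces to showing that the foliated part $q\beta$ of $\beta$ vanishes to second order along $M$. To verify this I would work in a chart in which ${\mathcal F}$ has leaves $\{z=\mathrm{const}\}$ and the bundle is trivialised with fibre coordinate $v$, so that $T{\mathcal F}'=\langle\partial_y,\partial_v\rangle$, $T^*{\mathcal F}'$ has basis $\{dy^i,dv^a\}$, and $M=\{v=0\}$. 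The hypothesis that $\omega$ vanishes on $T{\mathcal F}'|_M$ forces the $dy\wedge dv$ and $dv\wedge dv$ components of $\omega$ to vanish along $\{v=0\}$; since the operator $I$ of Proposition~\ref{poin2} is $\beta=I\omega=\int_0^1\rho_t^*(X_t.\omega)\,dt$ with $X_t$ the radial fibre field (which vanishes on $\{v=0\}$) and $\rho_t$ the fibrewise scaling $v\mapsto tv$, a direct inspection of this integral shows that the $dy$- and $dv$-components of $\beta$ are $O(|v|^2)$. Hence $d_xY_t=0$ for $x\in M$, so $d\delta_t=\mathrm{id}$ on $TM'|_M$; a routine shrinking of $U$ (using $\delta_0=\mathrm{id}$ and that $M$ is closed in $M'$) then arranges $\delta_t(U)\subseteq V$. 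The main obstacle is precisely this second-order vanishing of $\beta$ along $M$, which is where the full strength of the hypothesis is used — it is not enough that $i^*\omega=0$ on ${\mathcal F}$, one really needs $\omega$ to vanish on all of $T{\mathcal F}'|_M$ — everything else being the leafwise Moser argument modelled on \cite{wein1}; the same computation carried out at a single point of $M$ also proves Proposition~\ref{moser1}.
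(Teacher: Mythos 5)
Your proposal is correct and follows essentially the same route as the paper: apply Proposition~\ref{poin2} to $\omega_1-\omega_0$ to get a primitive $\beta$ vanishing along $M$, run Moser's trick with the leafwise-defined vector field $-I_{\omega_t}^{-1}(q\beta)$, and use Cartan's formula on $T{\mathcal F}'$. The one place you go beyond the paper is the condition $d\delta_t=\mathrm{id}$ on $TM'|_M$, which the paper dismisses with ``it is easy to see'' although it genuinely requires the second-order vanishing of $q\beta$ along $M$ (first-order vanishing of the vector field is not enough to freeze the linearisation of its flow); your explicit verification from the integral formula for $I$, using that $\omega_1-\omega_0$ vanishes on all of $T{\mathcal F}'|_M$ and not merely that $i^*(\omega_1-\omega_0)=0$, correctly fills this gap.
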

\begin {proof}
It follows from the hypothesis that $\omega_1-\omega_0$ is a
foliated closed form which vanishes on $T{\mathcal F}'|_M$.
Therefore, by Proposition~\ref{poin2}, $\omega_1-\omega_0
=d\alpha$ on $\mathcal{F'}$, for some 1-form $\alpha$ satisfying
$\alpha|_M=0$. For $0\leq t\leq 1$ we define a family of foliated
closed forms $\omega_t$ by $\omega_t = \omega_0 + t d\alpha$.
Since $\omega_1=\omega_0$ on $T{\mathcal F}'|_M$, each $\omega_t$
restricts to a foliated symplectic form on some neighbourhood $U$
of $M$ in $M'$. For each $t\in[0,1]$, define a foliated vector
field $X_t$ by $X_t={I_{\omega_t}}^{-1}(-q(\alpha))$ so that
$X_t.\omega_t=-\alpha$ on $T{\mathcal F}'$. Let $\delta_t$,
$t\in[0,1]$ be the one parameter family of diffeomorphisms defined
on some open neighbourhood of $M$ such that
$\delta_0=\mbox{\,id}_M$ and $\frac{d\delta_t}{dt}=X_t$. Since
$\alpha=0$ on $T{\mathcal F}'|_M$, it follows that
$\delta_t|_M=id$. Moreover, since $X_t$ is a foliated vector
field, $\{\delta_t\}$ is a foliation preserving diffeotopy.

Now, consider the identity
\begin{center}$\frac{d}{dt}(\delta_t^*\omega_t)=
{\delta_t}^*(\frac{d}{dt}{\omega_t}+X_t.
d\omega_t+d(X_t.\omega_t))$.\end{center} Since $\omega_t$ is a
foliated closed form, $X_t.d\omega_t=0$ on ${T\mathcal F}'$ we
obtain from the above relation
$\frac{d}{dt}q(\phi_t^*\omega_t)=0$. Hence
$\delta_t^*\omega_t=\delta_0^*\omega_0=\omega_0$ on $T{\mathcal
F}'$. In particular $\delta_1^*\omega_1=\omega_0$ on $T{\mathcal
F}'$. Also, it is easy to see that $\delta_t$ fixes $M$ pointwise
and therefore
$\frac{d}{dt}(d\delta_t)(x)=d\frac{d}{dt}\delta_t(x)=d(X_t(x))=0$
for all $x\in M$.\end {proof}

We end this section with the following result.

\begin{Theorem} Let $M$ be a closed manifold and let $\mathcal F$ be a regular foliation on
$M$. Let $\{\omega_t\}_{t\in [0,1]}$  be a smooth family of
foliated symplectic forms on $M$ such that
$\omega_t=\omega_0+{d}\alpha_t$ on $\mathcal{F}$, where $\alpha_t$
is a smooth family of $1$-forms. Then there exists a smooth
foliated diffeotopy $\delta_t:{M}\rightarrow {M},\
\delta_0=\mbox{id}$, such that ${\delta_t}^*(\omega_t)=\omega_0$
on $\mathcal F$ (that is, ${\delta_t}^*(\omega_t)=\omega_0$ in
$\Omega^2({\mathcal F})$).\end{Theorem}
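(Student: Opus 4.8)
The plan is to run the Moser deformation argument, keeping track only of the foliated data and using the compactness of $M$ to integrate the relevant time-dependent vector field over all of $[0,1]$. The point is that the family $\{\omega_t\}$ is already presented as a foliated-exact perturbation of $\omega_0$, so the cohomological obstruction that one would normally have to verify is built into the hypothesis; what remains is to choose the generating vector fields leafwise and to check that every manipulation descends to the foliated de Rham complex.

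First I would produce the generating foliated vector fields. Since each $\omega_t$ is a foliated symplectic form on $(M,\mathcal{F})$, the bundle map $I_{\omega_t}:T\mathcal{F}\lgra T^*\mathcal{F}$, $v\mapsto q(v.\omega_t)$, is a fibrewise isomorphism depending smoothly on $t$. Writing $\dot\alpha_t=\frac{d}{dt}\alpha_t$, set
$$X_t = I_{\omega_t}^{-1}\bigl(-q(\dot\alpha_t)\bigr)\in\mathcal{X}_{\mathcal{F}},$$
a smooth one-parameter family of foliated vector fields on $M$, so that $q(X_t.\omega_t)=-q(\dot\alpha_t)$ on $T\mathcal{F}$. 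Because $M$ is closed, the time-dependent flow of $\{X_t\}$ is defined for all $t\in[0,1]$; denote it $\delta_t:M\lgra M$, with $\delta_0=\mbox{id}$ and $\frac{d}{dt}\delta_t=X_t\circ\delta_t$. Since each $X_t$ is tangent to $\mathcal{F}$, the diffeotopy $\{\delta_t\}$ is foliation preserving, hence $q\circ\delta_t^*=\delta_t^{\sharp}\circ q$ on differential forms, where $\delta_t^{\sharp}:\Omega^*(\mathcal{F})\lgra\Omega^*(\mathcal{F})$ is the induced chain map.

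Next I would differentiate $\delta_t^*\omega_t$ and pass to foliated forms. As in the proof of Proposition~\ref{moser2}, Cartan's formula for time-dependent vector fields gives
$$\frac{d}{dt}(\delta_t^*\omega_t) = \delta_t^*\Bigl(\tfrac{d\omega_t}{dt} + X_t.d\omega_t + d(X_t.\omega_t)\Bigr).$$
Applying $q$: on the left $q$ commutes with $\frac{d}{dt}$; on the right $q(X_t.d\omega_t)=0$, since for $v_1,v_2\in T\mathcal{F}$ one has $(X_t.d\omega_t)(v_1,v_2)=d\omega_t(X_t,v_1,v_2)=0$ because $X_t$ is tangent to $\mathcal{F}$ and $\omega_t$ is foliated closed. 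Also $q\bigl(\tfrac{d\omega_t}{dt}\bigr)=\frac{d}{dt}q(\omega_t)=d_{\mathcal{F}}q(\dot\alpha_t)$ since $q(\omega_t)=q(\omega_0)+d_{\mathcal{F}}q(\alpha_t)$, and $q\bigl(d(X_t.\omega_t)\bigr)=d_{\mathcal{F}}q(X_t.\omega_t)=d_{\mathcal{F}}I_{\omega_t}(X_t)=-d_{\mathcal{F}}q(\dot\alpha_t)$ by the choice of $X_t$. Combining these with $q\circ\delta_t^*=\delta_t^{\sharp}\circ q$,
$$\frac{d}{dt}q(\delta_t^*\omega_t)=\delta_t^{\sharp}\bigl(d_{\mathcal{F}}q(\dot\alpha_t)-d_{\mathcal{F}}q(\dot\alpha_t)\bigr)=0.$$
Hence $q(\delta_t^*\omega_t)$ is constant in $t$, equal to $q(\delta_0^*\omega_0)=q(\omega_0)$; that is, $\delta_t^*\omega_t=\omega_0$ in $\Omega^2(\mathcal{F})$ for all $t$, which is the assertion.

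I expect the only genuinely delicate points to be bookkeeping ones: checking that $X_t$, defined through the leafwise isomorphism $I_{\omega_t}$ and the smooth family $\dot\alpha_t$, is jointly smooth in $(x,t)$ so that its flow is a bona fide smooth diffeotopy; and confirming that each step (Cartan's formula, the interchange of $q$ with $d$ and with $\delta_t^*$, and the vanishing of $q(X_t.d\omega_t)$) is legitimate at the level of the foliated de Rham complex rather than the full one. Compactness of $M$ is what upgrades the local isotopies of Propositions~\ref{moser1}--\ref{moser2} to a globally defined diffeotopy, and no new cohomological input is needed precisely because the hypothesis $\omega_t=\omega_0+d\alpha_t$ on $\mathcal{F}$ already furnishes the primitive.
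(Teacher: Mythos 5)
Your proof is correct, and it is essentially the argument the paper itself uses: the paper states this theorem without proof, but your globalized Moser deformation is exactly the computation carried out in the proof of Proposition~\ref{moser2}, with compactness of $M$ supplying completeness of the flow of $X_t=I_{\omega_t}^{-1}(-q(\dot\alpha_t))$. All the steps you flag as delicate (commuting $q$ with $\delta_t^*$, $d/dt$, and $d$, and the vanishing of $q(X_t.d\omega_t)$) check out as you describe.
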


\section{Construction of an extension}

In the subsequent discussion, $(N,\sigma)$ will denote a
symplectic manifold and $(M,\mathcal F)$ will denote a foliated
manifold with a foliated closed 2-form $\omega$.

Let $(F_0,f_0):TM\lgra TN$ be a bundle homomorphism which
satisfies the hypothesis of Theorem~\ref{main}:
\begin{enumerate}\item $F_0|_{T{\mathcal F}}:T{\mathcal F}\lgra
TN$ is a monomorphism and $F_0^*\sigma=\omega$ on $T{\mathcal F}$;
\item $f_0:M\lgra N$ is a continuous map such that the foliated
cohomology class of $f_0^*\sigma$ is the same as that of $\omega$.
\end{enumerate}

\begin{Proposition} There is a foliated
symplectic manifold $(M',{\mathcal F}',\omega')$ and a foliation
preserving embedding $i:(M,{\mathcal F})\lgra (M',{\mathcal F}')$
such that $i^*\omega'=\omega$ on $T{\mathcal F}$, where $i$ is the
embedding of $M$ in $M'$.

Further, $(F_0,f_0)$ extends to a bundle homomorphism
$(F_0',f_0'):TM'\lgra TN$ which satisfies the following
properties:
\begin{enumerate}\item $F'_0|_{T{\mathcal F}'}:T{\mathcal F}'\lgra
TN$ is a monomorphism and ${F'_0}^*\sigma=\omega'$ on $T{\mathcal
F}'$; \item $f_0:M'\lgra N$ is a continuous map such that the
foliated cohomology class of ${f'_0}^*\sigma$ is the same as that
of $\omega'$.
\end{enumerate}\label{extension}
\end{Proposition}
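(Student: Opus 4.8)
The plan is to build $(M',\mathcal{F}',\omega')$ so that the new leaves have dimension exactly $\dim N$ and contain the old leaves symplectically. First I would exploit the bundle map $F_0$. On $T\mathcal{F}$ it is a fibrewise injection into $TN$ with $F_0^*\sigma = \omega$; choose a complementary subbundle so that, after restricting $\sigma$, the image of $F_0|_{T\mathcal{F}}$ has a symplectic complement inside the pullback bundle $f_0^*TN$. Concretely, consider the pullback bundle $f_0^*TN \to M$ with the fibrewise symplectic form $f_0^*\sigma$; the subbundle $F_0(T\mathcal{F})$ is a symplectic subbundle (since $F_0^*\sigma = \omega$ is nondegenerate on $T\mathcal{F}$ — wait, $\omega$ need not be nondegenerate on $T\mathcal{F}$). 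So more carefully: let $K = \ker(\omega|_{T\mathcal{F}})$ be the (possibly non-constant-rank, but assume we reduce to the generic case or work leafwise) characteristic subbundle; the construction must accommodate degeneracy. I would take $E$ to be a vector bundle over $M$ together with a fibrewise linear map $E \to f_0^*TN$ so that $F_0 \oplus (E \to f_0^*TN)$ is a fibrewise \emph{symplectic monomorphism} from $T\mathcal{F} \oplus E$ into $f_0^*TN$ of rank $\dim N$. This is possible precisely because $\dim\mathcal{F} \le \dim N$ (hypothesis of the theorem) and linear symplectic algebra lets us complete any isotropic-plus-coisotropic configuration; this is the step where the dimension hypothesis enters.

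Next I would set $M' = $ total space of the bundle $E \to M$ (or a disk-bundle neighbourhood), with $i:M \hookrightarrow M'$ the zero section, and define $\mathcal{F}' = p^{-1}(\mathcal{F})$ where $p:M' \to M$ is the projection, so the leaves of $\mathcal{F}'$ are total spaces of $E$ restricted to leaves of $\mathcal{F}$, hence of dimension $\dim\mathcal{F} + \mathrm{rk}\,E = \dim N$. The form $\omega'$ is built in two moves: along the zero section the tangent bundle $T\mathcal{F}'|_M$ splits as $T\mathcal{F}\oplus E$, and I put the obvious fibrewise-symplectic form $\omega \oplus (\text{the pulled-back symplectic structure on } E)$ on it — a section of $\Lambda^2 T^*\mathcal{F}'$ over $M$ which is foliated nondegenerate and restricts to $\omega$ on $T\mathcal{F}$; then invoke Corollary~\ref{poin3} to extend it to a \emph{foliated closed} 2-form on a neighbourhood of $M$ in $M'$, shrinking the neighbourhood so that nondegeneracy on the leaves of $\mathcal{F}'$ persists (an open condition). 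Call this $\omega'$. By construction $i^*\omega' = \omega$ on $T\mathcal{F}$ and $(M',\mathcal{F}',\omega')$ is a foliated symplectic manifold.

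For the bundle map, I would define $F_0': TM' \to TN$ as follows. Over the zero section, $TM'|_M = TM \oplus E$; extend $F_0$ on the $TM$ summand and use the chosen map $E \to f_0^*TN$ on the $E$ summand, giving a bundle map over $f_0$ which, on $T\mathcal{F}' |_M = T\mathcal{F}\oplus E$, is a fibrewise \emph{isomorphism} onto $TN$ (rank $\dim N$) and pulls $\sigma$ back to $\omega'|_M$ by the symplectic-completion property. Then extend this bundle map (and the base map $f_0$, which stays the same since $M'$ retracts to $M$) over the whole neighbourhood $M'$ by composing with the retraction — concretely $F_0' := F_0'|_M \circ (Dp)$ composed appropriately, or just pull back via the deformation retraction, then perturb if necessary so that $F_0'|_{T\mathcal{F}'}$ is a fibrewise monomorphism (isomorphism onto $TN$) everywhere — this is again an open condition, valid on a possibly smaller neighbourhood. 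The condition ${F_0'}^*\sigma = \omega'$ on $T\mathcal{F}'$ holds on $M$ by construction; off $M$ it need not hold as written, so I would instead re-choose $\omega'$ to be \emph{defined as} $(F_0')^*\sigma$ on $T\mathcal{F}'$ after first arranging $F_0'$ to be a foliated monomorphism with $(F_0')^*\sigma$ foliated closed (it is automatically foliated closed since $\sigma$ is closed and $F_0'$ on leaves is the differential of nothing — hmm, here one must be careful: $(F_0')^*\sigma$ is a genuine 2-form on $M'$, foliated closed iff its leafwise exterior derivative vanishes, which is \emph{not} automatic). The cleanest route: keep $\omega'$ from the extension step and, having made $F_0'|_{T\mathcal{F}'}$ an isomorphism onto $TN$, observe $(F_0')^*\sigma$ and $\omega'$ are two foliated symplectic forms on $M'$ agreeing on $T\mathcal{F}'|_M$; by Moser's theorem for foliated symplectic forms (Proposition~\ref{moser2}) there is a foliation-preserving isotopy $\delta_t$ fixing $M$ with $\delta_1^*(F_0')^*\sigma = \omega'$ on $\mathcal{F}'$, so replace $F_0'$ by $F_0' \circ D\delta_1$; this still restricts to $(F_0, f_0)$ on $M$ since $\delta_1|_M = \mathrm{id}$ and $D\delta_1|_M = \mathrm{id}$. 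Finally, the cohomological condition (2): ${f_0'}^*\sigma = f_0^*\sigma$ as a form on $M'$ via $f_0' = f_0\circ p$, and the foliated cohomology class of $\omega'$ restricts (under $i^*$) to that of $\omega$, which equals that of $f_0^*\sigma$ by hypothesis; since $i$ is a homotopy equivalence and $\mathcal{F}' = p^{-1}\mathcal{F}$, the map $i^*: H^*(\mathcal{F}') \to H^*(\mathcal{F})$ is an isomorphism, so equality of classes on $M$ forces equality on $M'$.

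The main obstacle I anticipate is reconciling the two defining properties of $\omega'$ off the zero section: it must simultaneously be foliated closed (forcing the Poincaré/extension approach of Corollary~\ref{poin3}) and equal $(F_0')^*\sigma$ on leaves (forcing it to track the bundle map). These are only compatible on $M$ a priori, and the reconciliation over the neighbourhood genuinely needs the foliated Moser stability theorem (Proposition~\ref{moser2}) — getting the hypotheses of that proposition exactly right (two foliated \emph{symplectic} forms, agreement on $T\mathcal{F}'|_M$, the isotopy having trivial differential along $M$) is the technical heart. A secondary subtlety is handling possible non-constant rank of $\omega|_{T\mathcal{F}}$ in the symplectic-linear-algebra completion step; if the paper works only with the case where the relevant ranks are locally constant this is routine, otherwise one patches local completions together with a partition of unity, checking that the patched map is still a fibrewise symplectic monomorphism of the right rank.
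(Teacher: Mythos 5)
Your construction is essentially the paper's: the authors take $M'$ to be a neighbourhood of the zero section in the total space of the quotient bundle $f_0^*TN/T\mathcal{F}\to M$ (isomorphic to your complement $E$), put $\mathcal{F}'=\pi^{-1}(\mathcal{F})$ so that $\dim\mathcal{F}'=\dim N$, extend $F_0$ over the zero section to a fibrewise isomorphism $\bar F_0:T\mathcal{F}'|_M\to TN$, define the leafwise form over $M$ as $\bar F_0^*\sigma$, extend it to a foliated closed (hence foliated symplectic) form $\omega'$ by Corollary~\ref{poin3}, and then adjust the bundle map so that ${F_0'}^*\sigma=\omega'$ on $T\mathcal{F}'$, concluding with the same $i^*$-isomorphism argument for the cohomology condition. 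Two remarks. First, your detour through the kernel of $\omega|_{T\mathcal{F}}$ and ``symplectic completion'' is unnecessary: one needs only \emph{some} linear complement of $F_0(T\mathcal{F})$ in $f_0^*TN$, and the form on $T\mathcal{F}\oplus E$ should be \emph{defined} as the pullback of $\sigma$ under the resulting fibrewise isomorphism (not as a direct sum $\omega\oplus\omega_E$, which would require $F_0(T\mathcal{F})$ to be a symplectic subbundle); nondegeneracy and the restriction property are then automatic. Second, the wrinkle you flagged and then set aside — that $(F_0')^*\sigma$ for an arbitrary extension of the bundle map need not be foliated \emph{closed}, so Proposition~\ref{moser2} does not apply verbatim to the pair $((F_0')^*\sigma,\omega')$ — is genuine; the paper is equally terse here (``following Proposition~\ref{moser2}, we can show\dots''), and the clean repair is purely fibrewise linear algebra: $(T\mathcal{F}',\omega')$ and $({f_0'}^*TN,\sigma)$ are symplectic vector bundles of equal rank over $M'$ which are isomorphic over the deformation retract $M$ via $\bar F_0$, hence isomorphic over a possibly smaller neighbourhood, with no closedness hypothesis needed.
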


{\em Proof\/.} Consider the quotient bundle
$\pi:f_0^*TN/T{\mathcal F}\lgra M$ and denote the total space of
the bundle by $X$. $\dim X=\dim M+(\dim N-\dim {\mathcal
F})=\mbox{\,codim\,} {\mathcal F}+\dim N$. Let ${\mathcal F}'$ be
the foliation on $X$ defined by the map $\pi$, that is ${\mathcal
F}'=\pi^{-1}({\mathcal F})$. Clearly, codim\,${\mathcal F}'=$
codim\,${\mathcal F}$. Hence $\dim {\mathcal F}'=\dim N$.

$M$ is embedded in $X$ as the zero-section of $\pi$; consequently
$TM$ is canonically embedded in $TX|_M=TM\oplus X$. Let
$\tilde{F}$ be any extension of $F_0$ to a bundle morphism
$TX|_M\lgra f_0^*TN$ such that it maps $X$ isomorphically onto a
complementary subbundle of $df_0(T{\mathcal F})$ in $f_0^*TN$.
Define $\bar{F}_0=q\circ \tilde{F}$, where $q:f_0^*TN\lgra TN$ is
the canonical bundle morphism. By the above construction
$\bar{F}_0|_{T{\mathcal F}'}$ is a bundle isomorphism over $M$.
Therefore, $\bar{F}_0^*\sigma$ is non-degenerate on $T{\mathcal
F}'|_M$.

Since $\bar{F}_0^*\sigma$ restricts to a foliated closed form on
$M$, by Corollary~\ref{poin3} it extends to a foliated closed (and
hence a foliated symplectic) form $\omega'$ on a neighbourhood of
$M$ in $(X,{\mathcal F}')$. Then, following
Proposition~\ref{moser2}, we can show that $\bar{F}_0$ extends to
a bundle morphism $F_0'$ on an open neighbourhood $M'$ of $M$ such
that ${F'_0}^*\sigma=\omega'$ on $T{\mathcal F}'$.

The inclusion $i:M\hookrightarrow M'$ induces a morphism
$\Omega^2({\mathcal F}')\lgra \Omega^2(\mathcal F)$ which takes
the class of $\omega'$ in $\Omega^2({\mathcal F}')$ onto the class
of $\omega$ in $\Omega^2({\mathcal F})$. The induced map
$i^*:H^2({\mathcal F}')\lgra H^2(\mathcal F)$ in the cohomology
level, therefore, maps the foliated cohomology class of $\omega'$
onto that of $\omega$.

If $f_0'$ denotes the underlying map of $F_0'$ then it is an
extension of $f_0$. Since $i^*$ is an isomorphism, it follows that
$f_0'$ pulls back the deRham cohomology class of $\sigma$ onto the
foliated cohomology class of $\omega'$.\qed\\


\section{Sheaf of foliated Symplectic immersions}

\begin{Definition}{\em A foliated immersion $f:(M,{\mathcal F},\omega)\lgra (N,\sigma)$ is said to be a
{\em foliated symplectic immersion\/} if the restriction of
$f^*\sigma$ is same as the restriction of $\omega$ on each leaf of
the foliation.}
\end{Definition}

Let $M',{\mathcal F}'$ and $\omega'$ be as defined in the previous
section. Let ${\mathcal S}_{{\mathcal F}'}$ denote the sheaf of
foliated immersions $f:(M',{\mathcal F}')\lgra N$ which satisfy
$f^*\sigma=\omega'$ on ${\mathcal F}'$. The topology of the sheaf
comes from the $C^\infty$ compact open topology on
$C^\infty(M,N)$.

Let ${\mathcal R}_{{\mathcal F}'}\subset J^1(M',N)$ be the space
of 1-jets of foliated symplectic immersions of
$(M',\mathcal{F'},\omega')$ in $(N,\sigma)$. We endow the sheaf of
sections of $J^1(M',N)$ with the $C^0$ compact open topology.
${\Psi}_{\mathcal{F'}}$ will denote the subsheaf of sections of
the 1-jet bundle whose images lie in $\mathcal{R}_{\mathcal{F'}}$.
Note that $\Psi_{{\mathcal F}'}(M')$ can be identified with the
space Symp$^0(T{\mathcal F}',TN)$ defined in the introduction.

The sheaf $\Psi_{{\mathcal F}'}|_M$ is flexible
\cite[1.4.2(A$'$)]{gromov}. Moreover, we have the following.
\begin{Proposition} The $1$-jet map $j^1:{\mathcal S}_{{\mathcal
F}'}(x)\rightarrow \Psi_{{\mathcal F}'}(x)$ is a weak homotopy
equivalence for all $x\in M'$.\label{whe}\end{Proposition}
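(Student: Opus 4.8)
The plan is to show that the $1$-jet map $j^1 : \mathcal{S}_{\mathcal{F}'}(x) \to \Psi_{\mathcal{F}'}(x)$ is a weak homotopy equivalence by verifying it induces bijections on all homotopy groups; since both sides are spaces of germs at a point $x \in M'$, it suffices to work over arbitrarily small open neighbourhoods of $x$, and the natural strategy is to reduce to a local normal form in which the foliated symplectic immersion problem decouples into a product of an ordinary symplectic immersion problem along the leaf direction and an unconstrained problem in the transverse directions. First I would choose a foliated chart around $x$: since $\dim \mathcal{F}' = \dim N$, a neighbourhood of $x$ is foliation-preservingly diffeomorphic to $U \times W$, where $U \subseteq \R^{\dim N}$ is a plaque and $W \subseteq \R^{q}$ ($q = \operatorname{codim}\mathcal{F}'$) parametrizes the transversal, and the leaves are the slices $U \times \{w\}$. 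By Proposition \ref{moser1} (the foliated Moser stability), after shrinking $U$ we may assume $\omega'$ restricted to each leaf is the constant symplectic form $\sigma_0$ on $U$; that is, $\omega' = \sigma_0$ on $T\mathcal{F}'$ in this chart. Thus a section of $\mathcal{S}_{\mathcal{F}'}$ over $U \times W$ is a smooth family, parametrized by $w \in W$, of symplectic immersions $(U,\sigma_0) \to (N,\sigma)$, and similarly $\Psi_{\mathcal{F}'}$ over $U\times W$ is the space of $W$-families of symplectic monomorphisms $T_xU \to TN$ covering maps $U \to N$.

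With this reduction, the key step is to invoke Gromov's symplectic immersion theorem in parametrized form: the $h$-principle for symplectic immersions of $(U,\sigma_0)$ into $(N,\sigma)$ holds (this is the special case of Theorem \ref{main} for the trivial foliation, or directly Gromov), and since $U$ is an open subset of $\R^{\dim N}$ the obstruction is purely about the formal datum, which here is a symplectic monomorphism $T_xU \to TN$ together with a compatibility of cohomology classes that is automatic over a contractible $U$. Parametrized by the contractible (after shrinking) transversal $W$, Gromov's argument — which is itself sheaf-theoretic and hence applies to families — shows that the $1$-jet map from $W$-families of symplectic immersions to $W$-families of formal solutions is a weak homotopy equivalence. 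Concretely, I would cite \cite[\S 2.2]{gromov}: the relevant differential relation is open and the h-principle holds over open manifolds, and the proof is by a flexibility/microextension argument that is manifestly natural in additional parameters, giving the parametrized statement. Unwinding the local normal form identifications, this yields exactly that $j^1 : \mathcal{S}_{\mathcal{F}'}(x) \to \Psi_{\mathcal{F}'}(x)$ is a weak homotopy equivalence.

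The main obstacle I anticipate is making the reduction to the product normal form genuinely compatible with the sheaf structure and the $C^\infty$ topology — in particular, one must check that the foliation-preserving chart and the leafwise Moser isotopy from Proposition \ref{moser1} can be chosen to depend smoothly (and continuously in the relevant function-space topologies) on the transverse parameter, so that a $W$-family of foliated symplectic immersions is honestly carried to a $W$-family of symplectic immersions of the fixed model $(U,\sigma_0)$. This is where the ``$\delta_t$ is foliation preserving and $d\delta_t(x) = \mathrm{id}$'' refinement in Proposition \ref{moser1} is used: it guarantees the normalization can be performed leafwise without destroying the foliated structure or altering the $1$-jet data at $x$, so that the identifications on the $\mathcal{S}$-side and the $\Psi$-side are compatible with $j^1$. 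Once this naturality is in place, the remainder is a direct application of Gromov's parametric h-principle, and I would present the argument at the level of: (i) local foliated chart, (ii) leafwise Moser normalization via Proposition \ref{moser1}, (iii) identification of both sheaves over the chart with parametrized symplectic immersion sheaves, (iv) parametric Gromov.
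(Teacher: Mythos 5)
There is a genuine gap at your step (iv), which is where all of the homotopy-theoretic content of the statement has been deferred. In your local chart you have $\dim U=\dim\mathcal{F}'=\dim N$, so the ``parametric Gromov'' you invoke is the \emph{equidimensional} isosymplectic immersion problem, and neither of your citations covers it: Theorem \ref{main} (and the Gromov theorem it specializes to for the trivial foliation) explicitly requires $\dim\mathcal{F}<\dim N$. Worse, the $W$-parametrized version of that h-principle for families of symplectic immersions of $U$ is exactly the foliated h-principle on $(U\times W,\mathcal{F}')$ that the paper is in the middle of proving, and Proposition \ref{whe} is an \emph{ingredient} of that proof (it supplies the local weak homotopy equivalence that is fed into the sheaf homomorphism theorem). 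So as a citation the step is unavailable, and as a strategy it is circular: after your normalization in steps (i)--(iii) the remaining local problem is the original one.

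What is missing is the direct argument, which is short and is precisely where the equidimensionality $\dim\mathcal{F}'=\dim N$ does the work. By \cite[2.3.2(D),(D$'$)]{gromov} it suffices to show that an infinitesimal solution $j^1_f(x)\in\mathcal{R}_{\mathcal{F}'}$ can be homotoped, keeping the $1$-jet at $x$ fixed, to the jet of a genuine local solution. Since the leaves of $\mathcal{F}'$ have the same dimension as $N$, injectivity of $df_x$ on $T\mathcal{F}'_x$ is an open condition, so $f$ is already a foliated immersion near $x$ and $\bar\omega:=f^*\sigma$ is a foliated \emph{symplectic} form near $x$ agreeing with $\omega'$ on $T\mathcal{F}'_x$. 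One application of the foliated Moser stability (Proposition \ref{moser1}) to the pair $(\omega',\bar\omega)$ produces a local foliation preserving diffeotopy $\delta_t$ with $d\delta_t(x)=\mathrm{id}$ and $\delta_1^*\bar\omega=\omega'$ on $\mathcal{F}'$; then $f_1=f\circ\delta_1$ is a local solution and $j^1_{f\circ\delta_t}(x)=j^1_f(x)$ for all $t$. You do use Proposition \ref{moser1}, but only to normalize the ambient form $\omega'$ in a chart; the essential use is to correct the candidate map $f$ itself so that its germ becomes an actual element of $\mathcal{S}_{\mathcal{F}'}(x)$.
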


\begin{proof} In view of the discussion in \cite[2.3.2(D),(D$')$]{gromov} it
is enough to show that an infinitesimal solution of ${\mathcal
R}_{{\mathcal F}'}$ can be homotoped to a local solution. Let
$j^1_f(x)\in {\mathcal R}_{{\mathcal F}'}$ so that
$df_x|T{\mathcal F}'_x$ is an injective linear map and $f^*\sigma
= \omega'$ on $T{\mathcal F}'_x$.

Since $\dim {\mathcal F}'=\dim N$ and $df(x)$ is injective on
$T{\mathcal F}'(x)$ it follows that $f$ is a foliated immersion on
a neighbourhood of $x$ and $f^*\sigma$ is non-degenerate on
$T{\mathcal F}'$. Let $\bar{\omega}=f^*\sigma$. $\bar{\omega}$ is
a foliated symplectic form on a neighbourhood of $x$ such that
$\bar{\omega}_x=\omega'_x$ on $T{\mathcal F}'_x$. By applying
Proposition~\ref{moser1} we get a local diffeotopy $\delta_t$ on
Op$(x)$ in $M'$ such that $d\delta_t=\mbox{\,id}$ at $x$ and
$\delta_1^*f^*\sigma=\delta_1^*\bar{\omega}=\omega'$ on
$T{\mathcal F}'_x$. Let $f_t=f\circ \delta_t$, $0\leq t\leq 1$.
$f_1$ is a local solution of ${\mathcal R}_{{\mathcal F}'}$ and
$j^1_{f_t}(x)=j^1_{f}(x)\in{\mathcal R}_{{\mathcal F}'}$ for all
$t$. Thus we have proved that an infinitesimal solution can be
homotoped to a local solution of ${\mathcal R}_{{\mathcal F}'}$
and this completes the proof.
\end{proof}

It is important to note that ${\mathcal S}_{{\mathcal F}'}$ is not
microflexible \cite[3.4.1(B)]{gromov} and therefore we can not
apply the sheaf theoretic technique directly to it. But it is
possible to find an associated sheaf which has this desired
property apart from having the same local weak homotopy type as
${\mathcal S}_{{\mathcal F}'}$.

To define the associated sheaf we start with the bundle map
$(F'_0,f'_0):TM'\lgra TN$ that we constructed in
Proposition~\ref{extension}.

Consider the product manifold $M'\times N$ with the product form
$\hat{\omega}=p_2^*\sigma-p_1^*\omega'$, where $p_1$ and $p_2$ are
the projection maps from $M'\times N$ onto the first and the
second factors respectively. Let $\hat{\mathcal F}$ denote the
foliation on $M'\times N$ induced ${\mathcal F}'$ by the first
projection map $p_1$. It is easy to see that $\hat{\omega}$ is a
foliated symplectic form on $(M'\times N, \hat{\mathcal F})$.

If $f_0'$ is as above then $g_0=(1,f_0'):(M',{\mathcal F}')\lgra
(M'\times N,\hat{\mathcal F})$ is a foliation preserving embedding
and therefore it induces a map $g_0^*: H^2(\hat{\mathcal F})\lgra
H^2({\mathcal F}')$ in the foliated cohomology level.
It is easily seen that $g_0^*([\hat{\omega}]_{\hat{\mathcal
F}})=0$ in the foliated cohomology group $H^2({\mathcal F}')$.
Now, there exists a neighbourhood $Y$ of Image\,$g_0$ such that
Image\,$g_0$ is a strong deformation retract of $Y$; further the
deformation retraction is foliation preserving. Consequently,
$\hat{\omega}$ is a foliated exact form on $(Y,\hat{\mathcal F})$.
Hence there exists a 1-form $\tau$ on $Y$ such that
$\hat{\omega}=d\tau$ on $T{\hat{\mathcal F}}$.

Suppose, $f:(M',{\mathcal F}')\lgra N$ is a foliated immersion
such that $f^*\sigma$ and $\omega'$ define the same foliated form
in $\Omega^2({\mathcal F}')$, and suppose the graph map $g=(1,f)$
has its image contained in $Y$. Then observe that $g$ is foliation
preserving and $g^*\tau$ is a foliated closed form on
$(M,{\mathcal F}')$.

Denote by $\Gamma^\infty(Y)$, the space of sections $M'\lgra
M'\times N$ whose images lie in $Y$. Let
$\mathcal{E}_{\mathcal{F'}}$ denote the sheaf of all those pairs
$(g,\varphi)$ in $\Gamma^\infty(Y)\times C^\infty(M')$ which
satisfy the following conditions:
\begin{enumerate}\item $p_2 g:(M',{\mathcal F}')\lgra N$ is a
foliated immersion, so that $g$ is foliation preserving, and \item
$g^*\tau+d\varphi=0$ on $T{\mathcal F}'$.\end{enumerate}

Observe that there is a canonical map $\pi:\mathcal{E}_{{\mathcal
F}'}\lgra {\mathcal S}_{{\mathcal F}'}$ which maps $(g,\varphi)$
onto $p_2 g$.

\begin{Proposition} The topological sheaves $\mathcal{E}_{{\mathcal
F}'}$ and ${\mathcal S}_{{\mathcal F}'}$ have the same local weak
homotopy type.

Consequently, $j^1\circ \pi: \mathcal{E}_{{\mathcal
F}'}(x)\rightarrow \Psi_{{\mathcal F}'}(x)$ is a weak homotopy
equivalence for all $x\in M'$.\label{local}\end{Proposition}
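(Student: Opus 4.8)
The plan is to establish the weak homotopy equivalence by interposing the obvious fibration-type map $\pi:\mathcal{E}_{{\mathcal F}'}\lgra {\mathcal S}_{{\mathcal F}'}$, $(g,\varphi)\mapsto p_2g$, and showing that over a contractible open set $U$ its fibre is contractible. Concretely, fix $x\in M'$ and a small $U=\,$Op$(x)$. Given a section $f\in{\mathcal S}_{{\mathcal F}'}(U)$ — that is, a foliated immersion of $(U,{\mathcal F}')$ in $N$ with $f^*\sigma=\omega'$ on $T{\mathcal F}'$ — the graph $g=(1,f)$ is automatically foliation preserving for $\hat{\mathcal F}$, has image in $Y$ after possibly shrinking $U$, and satisfies $g^*\hat\omega=f^*\sigma-\omega'=0$ on $T{\mathcal F}'$; hence $g^*\tau$ is foliated closed on $(U,{\mathcal F}')$. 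Since $U$ is contractible we may assume the foliated Poincar\'e lemma applies, so $g^*\tau=-d\varphi$ on $T{\mathcal F}'$ for some $\varphi\in C^\infty(U)$, unique up to a function that is foliated-locally constant (constant along plaques). Thus $\pi$ is surjective on local sections, and the fibre $\pi^{-1}(f)$ is the affine space of such primitives $\varphi$, which is contractible; so $\pi:\mathcal{E}_{{\mathcal F}'}(U)\lgra {\mathcal S}_{{\mathcal F}'}(U)$ is a weak homotopy equivalence on a neighbourhood basis of each point. Combined with Proposition~\ref{whe}, which identifies $j^1:{\mathcal S}_{{\mathcal F}'}(x)\to\Psi_{{\mathcal F}'}(x)$ as a weak homotopy equivalence, this yields that $j^1\circ\pi$ is a weak homotopy equivalence for all $x\in M'$, which is the stated consequence.

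To make the fibre argument precise I would exhibit $\pi|_U$ as a \emph{Serre fibration} of the section spaces: given a family $f_s\in{\mathcal S}_{{\mathcal F}'}(U)$, $s\in D^k$, and a lift over $\partial D^k$ to pairs $(g_s,\varphi_s)$, one extends by choosing primitives $\varphi_s$ of $-g_s^*\tau$ on $T{\mathcal F}'$ that depend smoothly on $s$ and agree with the given ones on $\partial D^k$ — this is possible because the operator $I$ of the foliated Poincar\'e lemma (Proposition~\ref{poin2}, via the retraction of $U$) is linear and hence parametric, producing a canonical continuous section $f\mapsto(g,-\,{\textstyle\frac12}\!\ldots)$ — more cleanly: fix a foliation-preserving retraction $r_t$ of $U$ onto $x$ and set $\varphi=-\int_0^1 r_t^*(Z_t\,\lrcorner\, g^*\tau)\,dt$ with $Z_t=\frac{d}{ds}r_s|_{s=t}$; this depends continuously (indeed smoothly) on $f$ and gives a genuine continuous section $\mathfrak{s}$ of $\pi|_U$. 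Then $\pi$ has a section and contractible fibres over a basis of opens, so it is a local weak homotopy equivalence of sheaves.

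The main obstacle I anticipate is not the Poincar\'e-lemma step itself but the bookkeeping needed to guarantee that the construction is simultaneously (i) continuous as a map of section spaces in the $C^\infty$ compact-open topology, (ii) compatible with restriction to smaller opens (so that it is genuinely a morphism of sheaves and descends to stalks), and (iii) arranged so that the image of the graph stays inside $Y$ uniformly over a $k$-parameter family — this forces one to work over sufficiently small opens and to use that Image $g_0$ is a foliation-preserving strong deformation retract of $Y$. Once the parametric primitive operator is set up these are routine but must be stated carefully; I would organise them exactly as the fibration property plus contractible fibres, invoke \cite[2.2]{gromov} or \cite[2.3.2(D),(D$')$]{gromov} to pass from local section-space equivalences to stalk-level weak homotopy equivalence, and finally compose with Proposition~\ref{whe}.
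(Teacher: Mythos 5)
Your proposal follows essentially the same route as the paper: both identify the fibre of $\pi$ over a germ $f\in{\mathcal S}_{{\mathcal F}'}(x)$ as the contractible affine space of local primitives $\varphi$ of $-g^*\tau$ along ${\mathcal F}'$ (any two such primitives differing by a function constant on plaques, i.e.\ an element of ${\mathcal C}^\infty(0)$ on the transversal $\R^k$), and then compose with Proposition~\ref{whe}; your version merely makes explicit the surjectivity of $\pi$ on local sections and the parametric continuity of the primitive, which the paper leaves implicit. The one small correction: the homotopy operator must be built from a foliation-preserving \emph{leafwise} contraction of $U$ onto a local transversal through $x$ (so that $Z_t$ is foliated and $I$ maps $I^*({\mathcal F}')$ into itself, exactly as in the proof of Proposition~\ref{poin2}), not from a retraction of $U$ onto the point $x$ itself, since a retraction onto a point cannot have foliated velocity field when $\mbox{codim}\,{\mathcal F}'>0$.
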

\begin{proof} Take any two elements $(g, \varphi_1)$ and
$(g,\varphi_2)$ in $\mathcal{E}_{\mathcal{F'}}(x)$ over some $f\in
{\mathcal S}_{{\mathcal F}'}(x)$, then $d(\varphi_1- \varphi_2)=0$
on $\mathcal{F'}$ in some foliated neighbourhood of $x \in M'$
which implies that $(\varphi_1-\varphi_2)=$ constant on each
plaque of $\mathcal{F'}$. If $\mathcal{F'}$ is of codimension $k$,
we get $\mathcal{E}_{\mathcal{F'}}(x)=
\mathcal{S}_{\mathcal{F'}}(x){ \times }\mathcal{C}^\infty(0)$,
where $\mathcal{C}^\infty(0)$ is the space of germs of real valued
functions on $\R^k$ at $0$. Since $\mathcal{C}^\infty(0)$ is
contractible, this completes the proof.\end{proof}

\begin{Lemma} The sheaf ${\mathcal E}_{{\mathcal F}'}$ is
microflexible.\label{micro}
\end{Lemma}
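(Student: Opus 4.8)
The plan is to verify microflexibility of ${\mathcal E}_{{\mathcal F}'}$ directly from the definition: given a compact pair $K_0 \subset K$ in $M'$, a continuous family of sections $(g_p, \varphi_p) \in {\mathcal E}_{{\mathcal F}'}(K)$ parametrised by a polyhedron $P$, and an extension of the restriction $(g_p, \varphi_p)|_{K_0}$ to a homotopy $(g_{p,t}, \varphi_{p,t}) \in {\mathcal E}_{{\mathcal F}'}(\mathrm{Op}(K_0))$ for $t \in [0,1]$, I must produce — for some small $\epsilon > 0$ — a homotopy $(\tilde g_{p,t}, \tilde\varphi_{p,t}) \in {\mathcal E}_{{\mathcal F}'}(\mathrm{Op}(K))$, $t \in [0,\epsilon]$, extending the given data near $K_0$ and agreeing with $(g_p,\varphi_p)$ at $t=0$. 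The key structural observation is that the two defining conditions of ${\mathcal E}_{{\mathcal F}'}$ decouple in a favourable way: condition (1), that $p_2 g$ be a foliated immersion, is an \emph{open} condition on the $1$-jet of $g$ along $T{\mathcal F}'$, and hence is microflexible by the standard argument (openness plus a small-time extension of the section $g$ itself, cf. \cite[3.4.1]{gromov}); condition (2), that $g^*\tau + d\varphi = 0$ on $T{\mathcal F}'$, is an \emph{underdetermined first-order equation for $\varphi$ alone once $g$ is fixed}, because differentiation along the leaves of ${\mathcal F}'$ is an operator that admits a right inverse given by leafwise integration.

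First I would handle $g$: since being a foliated immersion is $C^1$-open along the leaves, the family $g_p$ together with the prescribed extension $g_{p,t}$ near $K_0$ can be extended to a family $\tilde g_{p,t}$ on $\mathrm{Op}(K)$ for $t \in [0,\epsilon]$ (for $\epsilon$ small, so that the immersion condition survives on a full neighbourhood of $K$ by compactness and continuity); this is just microflexibility of sections subject to an open differential relation. Second, with $\tilde g_{p,t}$ in hand, I must solve $\tilde g_{p,t}^*\tau + d\tilde\varphi_{p,t} = 0$ on $T{\mathcal F}'$ with $\tilde\varphi_{p,0} = \varphi_p$ and $\tilde\varphi_{p,t} = \varphi_{p,t}$ near $K_0$. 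The $1$-form $\beta_{p,t} := \tilde g_{p,t}^*\tau$ restricts to a foliated closed form on $\mathrm{Op}(K)$ — because $\tilde g_{p,t}$ is foliation preserving and $d\hat\omega = 0$ forces $d\tau$ to vanish on $T\hat{\mathcal F}$, whence $d\beta_{p,t} = \tilde g_{p,t}^* d\tau = 0$ on $T{\mathcal F}'$. Moreover at $t = 0$ and near $K_0$ it is already \emph{foliated exact} with the prescribed primitive. By shrinking to a neighbourhood covered by foliation charts (so each plaque is a ball) and using a partition of unity subordinate to such a cover, leafwise Poincaré integration produces a continuous-in-$(p,t)$ family of functions $\psi_{p,t}$ with $d\psi_{p,t} = -\beta_{p,t}$ on $T{\mathcal F}'$; subtracting off the boundary values via a cutoff in the transverse coordinates and matching with the given $\varphi_{p,t}$ near $K_0$ (the difference of two solutions is leafwise locally constant, so interpolation is unobstructed), I obtain the required $\tilde\varphi_{p,t}$.

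The main obstacle is the bookkeeping in this second step: one must choose the leafwise primitives so that they depend continuously on the parameters $(p,t)$ \emph{and} restrict correctly to the already-fixed extension on $\mathrm{Op}(K_0)$, all while respecting the transverse gluing so that $\tilde\varphi_{p,t}$ is a genuine smooth function on a neighbourhood of $K$ rather than merely a leafwise object. This is exactly the kind of argument where one exploits that $\mathrm{codim}\,{\mathcal F}'$ is finite and that the relevant leafwise cohomology obstruction has been killed a priori on $Y$ (hence locally the equation for $\varphi$ is always solvable); the freedom in the primitive — addition of leafwise-locally-constant functions, a contractible ambiguity as already noted in the proof of Proposition \ref{local} — gives precisely the flexibility needed to reconcile the interior solution with the prescribed data near $K_0$. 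Once continuity in the parameters and the transverse smoothness are arranged, microflexibility follows.
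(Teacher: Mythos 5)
There is a genuine gap, and it sits at the heart of your plan: the two conditions defining ${\mathcal E}_{{\mathcal F}'}$ do \emph{not} decouple in the way you claim. For a fixed foliation preserving section $g$, the equation $d\varphi=-g^*\tau$ on $T{\mathcal F}'$ is not an underdetermined equation for $\varphi$; it is solvable, even locally, only if $g^*\tau$ is leafwise closed. Your justification that $\beta_{p,t}=\tilde g_{p,t}^*\tau$ is automatically foliated closed rests on the assertion that ``$d\hat\omega=0$ forces $d\tau$ to vanish on $T\hat{\mathcal F}$'', which is a miscalculation: by construction $d\tau=\hat\omega$ on $T\hat{\mathcal F}$, and $\hat\omega$ is a foliated \emph{symplectic} (hence leafwise nondegenerate, certainly nonvanishing) form on $(M'\times N,\hat{\mathcal F})$. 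Consequently $d\beta_{p,t}=\tilde g_{p,t}^*\hat\omega=f_{p,t}^*\sigma-\omega'$ on $T{\mathcal F}'$, where $f_{p,t}=p_2\tilde g_{p,t}$, and this vanishes exactly when $f_{p,t}$ is a foliated symplectic immersion --- the closed, non-open condition whose deformability is the whole difficulty. If you first extend $g$ using only the openness of the immersion condition, the equation for $\varphi$ will in general have no solution at all; if you instead constrain the extension of $g$ by $\tilde g^*\hat\omega=0$ on $T{\mathcal F}'$, you are back to deforming the sheaf ${\mathcal S}_{{\mathcal F}'}$, which the paper explicitly notes is \emph{not} microflexible. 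So the argument is circular where it is not false.

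The paper's proof goes a different and essentially unavoidable route: it regards $(g,\varphi)\mapsto (g^*\tau+d\varphi)|_{T{\mathcal F}'}$ as a first order differential operator ${\mathcal D}$ and computes its linearisation $L(\partial,\tilde\varphi)=g^*(\partial.\hat\omega+d(\partial.\tau)+d\tilde\varphi)|_{T{\mathcal F}'}$. Because $\hat\omega$ is nondegenerate on the leaves of $\hat{\mathcal F}$ (this is where $\dim{\mathcal F}'=\dim N$ and the extension constructed in Section 5 enter), the map $\partial\mapsto g^*(\partial.\hat\omega)|_{T{\mathcal F}'}$ is onto whenever $p_2g$ is a foliated immersion, so $L$ admits a right inverse: solve $g^*(\partial.\hat\omega)|_{T{\mathcal F}'}=\tilde\omega$ for $\partial$ and set $\tilde\varphi=-\partial.\tau$. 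Thus ${\mathcal D}$ is infinitesimally invertible over an open differential relation, and microflexibility follows from Gromov's implicit function theorem \cite[2.3.2]{gromov}. The analytic content your proposal is missing is exactly this Nash--Moser step; leafwise Poincar\'e integration with cutoffs cannot replace it.
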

\begin{proof} Define a first order differential operator
\begin{center}${\mathcal D}: \Gamma^\infty(Y)\times C^\infty(M')
\lgra \Omega^1({\mathcal F}')$\end{center}by \hspace{4cm}$(g,\phi)
\mapsto (g^*\tau+d\phi)|_{T{\mathcal F}'}.$\\

Let $L$ denote the linearisation of ${\mathcal D}$ at $(g,\phi)$.
Then
\begin{center}$\begin{array}{rcl}L(\partial,\tilde{\phi}) & = &
g^*(\partial.\hat{\omega}+d(\partial.\tau)+d\tilde{\phi})|_{T{\mathcal
F}'},\end{array} $\end{center} where $\partial$ is a smooth vector
field on $N$ along $g$ and $\tilde{\phi}$ is a smooth function on
$M$.

$L$ is right invertible if given any 1-form
$\tilde{\omega}\in\Omega^1({\mathcal F}')$ we can solve the
following system of equations:
\begin{center}$g^*(\partial.\hat{\omega})|_{T{\mathcal F}'} = \tilde{\omega}$\\
$\partial.\tau+ \tilde{\phi} = 0 $\end{center}

Note that $\hat{\omega}$ is a foliated symplectic form on
$(\hat{M},\hat{\mathcal F})$; hence, if $g:M'\lgra Y\subset
M'\times N$ is a foliation preserving section, then the map
$\partial \mapsto g^*(\partial.\hat{\omega})|_{T{\mathcal F}'}$ is
an epimorphism. Consequently, for such a $g$ we can solve the
first equation for $\partial$, and then take
$\tilde{\phi}=-\partial.\tau$.

Thus, the operator ${\mathcal D}$ is infinitesimally invertible on
all those $(g,\phi)$ for which $p_2 g:(M',{\mathcal F'})\lgra
(Y,\hat{\mathcal F})$ is a foliation preserving immersion.

Further, the foliation preserving immersions are solutions to some
first order open differential relation.

Hence ${\mathcal E}_{{\mathcal F}'}$ is a microflexible sheaf
(\cite[2.3.2]{gromov}).\end{proof}

\section{Proof of Theorem \ref{main}}

We shall first prove that ${\mathcal E}_{{\mathcal F}'}|_M$ is
flexible. Once we show this we can conclude with the Sheaf
Homomorphism Theorem that $j^1\circ\pi:{\mathcal E}_{{\mathcal
F}'}|_M\lgra \Psi_{{\mathcal F}'}|_M$ is a weak homotopy
equivalence.

Recall a result on continuous sheaves from \cite[2.2.3]{gromov}.

\begin{Theorem}  Let $\Phi$ be a microflexible sheaf over a manifold $V$
and let a submanifold $V_0\subset V$ of positive codimension be
sharply movable by acting diffeotopies. Then the sheaf
$\Phi_0=\Phi|_V$ is a flexible sheaf. \label{gromov}\end{Theorem}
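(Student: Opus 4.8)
The plan is to unwind both hypotheses into the single mechanism of homotopy lifting, and then to use the sharply moving diffeotopy to upgrade the short-time lifts furnished by microflexibility into lifts defined for all time. Recall that flexibility of the restricted sheaf $\Phi_0$ amounts to the assertion that for every pair of compact sets $A\subseteq B$ in $V_0$ the restriction $\Phi(\mathrm{Op}\,B)\lgra \Phi(\mathrm{Op}\,A)$ enjoys the parametric homotopy lifting property over the whole interval $[0,1]$, whereas microflexibility of $\Phi$ gives exactly this property but only over a short subinterval $[0,\vare]$ whose length may depend on the data. Thus the entire content of the theorem is the passage from short-time to all-time lifting. I would first carry out the standard reduction over the parameter polyhedron $P$: by induction on the skeleta of $P$ relative to $P_0$ one reduces the parametric lifting problem to the case of a single cell, so that it suffices to lift one continuous homotopy $\{\phi_t\}_{t\in[0,1]}\in\Phi(\mathrm{Op}\,A)$ starting from a prescribed section in $\Phi(\mathrm{Op}\,B)$ over $\phi_0$. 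I take this reduction as routine.

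The engine of the argument is the action of the diffeotopy on $\Phi$. Because $V_0$ has positive codimension, sharp movability supplies, for any compact $C\subset V$, a diffeotopy $\{\delta_s\}$ of $V$ acting on $\Phi$ with $\delta_0=\mathrm{id}$ which pushes $V_0$ off $C$ instantaneously, in the sense that $\delta_s(V_0)\cap C=\emptyset$ for every $s>0$ while the motion is concentrated near $s=0$. The role of this diffeotopy is to \emph{free} a section: transporting a germ defined near $A$ by $\delta_s$ places it near the displaced copy $\delta_s(A)$, which is disjoint from $A$, so that over $\delta_s(A)$ the continuation of the lifting problem carries no constraint and the section there may be altered at will. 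In effect, applying a small amount of the diffeotopy resets the microflexibility budget, because after displacement the fresh short-time lift is posed over a region where $\Phi$ is unconstrained.

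With this in hand I would build the all-time lift by a telescoping procedure. I choose a partition $0=t_0<t_1<\dots<t_m=1$ fine enough that microflexibility lifts the homotopy over each subinterval $[t_i,t_{i+1}]$. On the $i$-th step I apply a piece of the sharply moving diffeotopy to displace $A$ off the region where the lift obtained so far is controlled only near $A$, invoke microflexibility to propagate the lift over a neighbourhood of $B$ for that subinterval, and then flow back by the inverse diffeotopy. The sharpness of $\{\delta_s\}$, namely that the displacement is achieved for arbitrarily small $s$ and that $\delta_s$ genuinely acts on $\Phi$, guarantees that each displacement consumes no homotopy time, so the bounded microflexible budget is enough at every stage, while the agreement of consecutive pieces at the nodes $t_i$ lets them be concatenated into one continuous family over $[0,1]$ landing in a common neighbourhood of $B$.

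The hard part will be the uniform bookkeeping that makes this concatenation legitimate. At each step the neighbourhood $\mathrm{Op}$ on which the lift is defined may shrink, so one must arrange a nested sequence of neighbourhoods of $B$ whose intersection still contains $B$, and the microflexibility radius $\vare$ must be chosen uniformly over the compact parameter space $P$ and over the stages of the telescope; securing this uniformity, together with the continuity of the resulting family in $(p,t)$, is the technical heart of the proof. It is precisely here that positive codimension, so that $V_0$ can be pushed off itself at all, and the sharpness of the diffeotopy, so that the displacement costs no time and is compatible with the $\Phi$-action, are indispensable, and I expect verifying these uniform estimates to be the main obstacle.
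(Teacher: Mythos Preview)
The paper does not prove this theorem at all: it is introduced with ``Recall a result on continuous sheaves from \cite[2.2.3]{gromov}'' and is simply quoted from Gromov's book as a black box, with no argument given. There is therefore nothing in the paper to compare your proposal against.

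That said, your sketch is a reasonable high-level outline of the mechanism behind Gromov's proof: the sharply moving diffeotopies, acting on $\Phi$, are used to convert the short-time lifts furnished by microflexibility into all-time lifts, and the positive codimension of $V_0$ is what makes such displacement possible. One point to be careful about is the direction of the displacement: in Gromov's argument the diffeotopy does not push $A$ off itself but rather moves $V_0$ transversally to itself inside $V$, so that the homotopy-lifting problem for $\Phi_0=\Phi|_{V_0}$ over $A\subset B\subset V_0$ is reposed along the displaced copy $\delta_s(V_0)$ where, near the obstructed region, it is automatically solved by the ambient section over $\mathrm{Op}(B)\subset V$. Your description of ``displacing $A$'' and then ``flowing back by the inverse diffeotopy'' is a bit imprecise on this point, but the underlying intuition is correct.
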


Consider the pseudogroup ${\mathcal D}$ consisting of local
diffeotopies $\delta_t$ on $M'$ that preserve the foliation
${\mathcal F}'$ and at the same time preserve the class of
$\omega'$ in $\Omega^2({\mathcal F}')$. Clearly ${\mathcal D}$
acts on the sheaf ${\mathcal S}_{{\mathcal F}}'$. Such
diffeotopies may be obtained by integrating a foliated vector
field $\partial$ such that $\partial.\omega'$ is a foliated closed
form on $(M',{\mathcal F}')$. Indeed if $\partial$ is a foliated
vector field then its restriction to each leaf is a vector field
on the leaf. Thus it integrates to a foliation preserving
diffeomorphism on $M'$. Also, observe that ${\mathcal L}_\partial
\omega'=\partial.d\omega'+d(\partial.\omega')=0$ on $T{\mathcal
F}'$. Hence $\delta_t^*\omega'=\omega'$ in $\Omega^2({\mathcal
F}')$.

We shall show that there exists a pseudosubgroup of ${\mathcal D}$
which acts on ${\mathcal E}_{{\mathcal F}'}$ and sharply moves $M$
in $M'$. As we have already proved microflexibility of ${\mathcal
E}_{{\mathcal F}'}$, the flexibility of ${\mathcal E}_{{\mathcal
F}'}|_M$ will follow from Theorem~\ref{gromov}.

\begin{Definition}{\em A foliation preserving diffeotopy $\delta_t$
of $(M',{\mathcal F}')$ is called {\em exact foliation preserving
diffeotopy\/} if there exists a 1-parameter family of 0-forms
$\alpha_t$ on $M'$ such that $\delta'_t.\omega'=d\alpha_t$ on
${\mathcal F}'$. In particular, $\delta'_t.\omega'$ is a foliated
exact form for each $t$.

If $\alpha_t$  can be chosen to be identically zero on the maximal
open subset where $\delta_t$  is constant then such a diffeotopy
is called a }strictly exact foliation preserving
diffeotopy.\end{Definition}

\begin{Lemma} The strictly exact foliation preserving diffeotopies of $M'$
act on $\mathcal{E}_{\mathcal{F'}}$.\label{action}\end{Lemma}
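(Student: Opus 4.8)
The plan is to show that if $\delta_t$ is a strictly exact foliation preserving diffeotopy of $M'$ and $(g,\varphi)\in\mathcal{E}_{\mathcal{F'}}(U)$ for some open $U\subseteq M'$, then the natural candidate for the action produces a new element of $\mathcal{E}_{\mathcal{F'}}$. The element $(g,\varphi)$ consists of a section $g=(1,f):M'\to M'\times N$ with $p_2g=f$ a foliated immersion and $g^*\tau+d\varphi=0$ on $T\mathcal{F}'$. Given $\delta_t$, the obvious way to transport $g$ is to set $f_t = f\circ\delta_t$ and $g_t=(1,f_t)$; since $\delta_t$ is foliation preserving and $f$ is a foliated immersion, $f_t$ is again a foliated immersion, so condition (1) in the definition of $\mathcal{E}_{\mathcal{F'}}$ is automatic. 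The real content is to produce the companion function $\varphi_t$ so that $g_t^*\tau+d\varphi_t=0$ on $T\mathcal{F}'$, and to check that $\varphi_t$ depends smoothly on $t$ with $\varphi_0=\varphi$, and that $(g_t,\varphi_t)$ vanishes appropriately on the set where $\delta_t$ is constant (so that the action is genuinely defined on germs/sections and is compatible with restriction).

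The key computation is to differentiate $g_t^*\tau$ in $t$. Writing $\delta'_t$ for the time-dependent vector field generating $\delta_t$, one has on $T\mathcal{F}'$ the Lie-derivative identity $\frac{d}{dt}\delta_t^*\tau = \delta_t^*(\delta'_t.d\tau + d(\delta'_t.\tau))$, and since $d\tau=\hat\omega$ on $T\hat{\mathcal F}$ pulls back under the foliation preserving map to $\delta'_t.\omega'$ plus lower-order terms, the hypothesis that $\delta_t$ is \emph{strictly exact} gives $\delta'_t.\omega' = d\alpha_t$ on $\mathcal{F}'$ for a family $\alpha_t$ that vanishes where $\delta_t$ is constant. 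Pulling back through $g_t$ (which is foliation preserving) and using that $g_t$ commutes suitably with $p_1$, I expect to obtain $\frac{d}{dt}\big(g_t^*\tau\big) = d\psi_t$ on $T\mathcal{F}'$ for an explicit family of functions $\psi_t$ built from $g_t^*\alpha_t$ and $g_t^*(\delta'_t.\tau)$, with $\psi_t$ vanishing on the open set where $\delta_t$ is constant. Then setting $\varphi_t = \varphi - \int_0^t \psi_s\,ds$ gives a smooth family with $\varphi_0=\varphi$ and, by construction, $\frac{d}{dt}(g_t^*\tau + d\varphi_t)=0$ on $T\mathcal{F}'$, so this expression stays equal to its value $0$ at $t=0$; hence $(g_t,\varphi_t)\in\mathcal{E}_{\mathcal{F'}}$. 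Finally one checks the group/pseudogroup compatibility: the assignment $(g,\varphi)\mapsto(g_t,\varphi_t)$ respects composition of diffeotopies (up to the usual reparametrisation) and restriction to smaller opens, because both $f\circ\delta_t$ and the integral defining $\varphi_t$ are local constructions.

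The main obstacle I anticipate is bookkeeping the interaction between the two factors of $M'\times N$: $\tau$ lives upstairs on $Y\subseteq M'\times N$, whereas $\delta_t$ acts only on $M'$, so one must be careful that $g_t = (1, f\circ\delta_t)$ is the correct transported section (as opposed to $g\circ\delta_t$, which is \emph{not} a section of $p_1$). Concretely, $g_t^*\tau = (1,f\circ\delta_t)^*\tau$, and its $t$-derivative mixes a term coming from moving the $M'$-coordinate by $\delta'_t$ with a term coming from moving the $N$-coordinate by $df(\delta'_t)$; reassembling these into $d\psi_t$ on $T\mathcal{F}'$, and confirming that the "strictly" part of strict exactness is exactly what forces $\psi_t\equiv 0$ on the constancy locus of $\delta_t$, is the delicate point. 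Once that identity is in hand, the rest is the routine integration argument above.
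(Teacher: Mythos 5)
Your proposal is correct in its overall strategy and rests on the same two pillars as the paper's argument: the Cartan-type formula for $\frac{d}{dt}$ of a pulled-back form together with $d\tau=\hat\omega$ on $T\hat{\mathcal F}$, and the strict exactness $\delta'_t.\omega'=d\alpha_t$ (with $\alpha_t$ vanishing on the constancy locus) which lets you integrate up a correcting family $\varphi_t$. The packaging, however, is genuinely different, and the difference lands exactly on the point you flag as delicate. The paper does not differentiate $g_t^*\tau$ for $g_t=(1,f\circ\delta_t)$; instead it lifts $\delta_t$ to $\bar\delta_t=\delta_t\times\mathrm{id}$ on $M'\times N$, computes \emph{once and for all} that $\bar\delta_t^*\tau=\tau+d\varphi_t$ on $\hat{\mathcal F}$ (here $\bar\delta'_t.\hat\omega=-p_1^*(\delta'_t.\omega')$ holds on the nose, since $\bar\delta'_t$ has no $N$-component), and then defines the action by conjugation, $g'=\bar\delta_t\, g\,\delta_t^{-1}$ and $\varphi'=(\delta_t^{-1})^*(\varphi+g^*\varphi_t)$. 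This makes the primitive $\varphi_t$ independent of the particular section $g$ and renders the pseudogroup compatibility essentially formal. In your direct computation the generating field of $g_t$ is $(0,df(\delta'_t))$, whose contraction with $\hat\omega$ hits the $p_2^*\sigma$ term, so to reach $\delta'_t.\omega'$ and invoke strict exactness you must insert $f^*\sigma=\omega'$ on $\mathcal F'$ in mid-computation; since that equality is only guaranteed on $T{\mathcal F}'\times T{\mathcal F}'$, the step needs $\delta'_t$ tangent to ${\mathcal F}'$ (an assumption the paper's own computation also uses implicitly, and which holds for the Hamiltonian-type diffeotopies used later). Two minor points: your transported section $f\circ\delta_t$ is the paper's $\bar\delta_t g\delta_t^{-1}=(1,f\circ\delta_t^{-1})$ with $\delta_t$ reversed -- harmless for defining an action, but for the subsequent sharp-movability argument the domain of the new section must move \emph{forward} with $\delta_t$, so the paper's convention is the one you will want; and your $\psi_t$ should be built from the contraction of $\tau$ with $(0,df(\delta'_t))$ rather than with $\delta'_t$ itself. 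With those adjustments your route goes through; the paper's buys a cleaner, $g$-independent computation at the cost of introducing the lift $\bar\delta_t$.
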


{\em Proof\/.} Let $\delta_t$ be a strictly exact foliation
preserving diffeotopy on $M'$. We define a diffeotopy
$\bar{\delta_t}$ on $M'{\times}N$ by $\bar{\delta_t}(x,y) =
(\delta_t(x),y)$, where $x\in{M'}$  and $y\in{N}$. Recall that
$M'{\times}N$ has the product foliation $\hat{\mathcal{F}}$ and
$\bar{\delta_t}(\hat{\mathcal F})\subseteq\hat{\mathcal F}$.
Moreover, $\bar{\delta'_t}.(\sigma-\omega')$ is foliated exact
(since ${\delta'_t}.\omega'$ is foliated exact).

Proceeding exactly as in \cite[3.4.1]{gromov}, let $\alpha_t$ be a
smooth family of 0-forms on $M'{\times}N$ satisfying
$\bar{\delta'_t}.(\sigma-\omega') = d{\alpha_t}$  on
$\hat{\mathcal{F}}$. Since,
\begin{center}$\frac{d}{dt}(\bar{\delta}^*_t{\tau})=
\bar{\delta}^*_t(d(\bar{\delta'_t}.\tau) +
\bar{\delta'_t}.d{\tau})=\bar{\delta}^*_t(d(\bar{\delta'_t}.\tau)+\bar{\delta'_t}.(\sigma-\omega'))$\end{center}
we have
\begin{equation}
 \frac{d}{dt}(\bar{\delta}^*_t{\tau}) = d\bar{\delta^*_t}(\bar{\delta}'_t.\tau + \alpha_t)\ \mbox{ on } \hat{\mathcal F}.
\end{equation}
Integrating this equation with respect to $t$ we obtain
$\bar{\delta_t^*}\tau = \tau + d{\varphi_t}$  on
$\hat{\mathcal{F}}$, where $\varphi_t$ is a family of functions on
$M'$.

Let $(g,\varphi)$ be a section in ${\mathcal E}_{{\mathcal F}'}$
and let $\delta_t$ be as above. Define the action as in
\cite{gromov} by $\delta_t.(g,\varphi) = (g',\varphi')$ , where
$g'= \bar{\delta_t}g\delta_t^{-1}$ and
${\varphi}'=(\delta_t^{-1})^*( \varphi  + g^*{\varphi_t})$.\qed

\begin{Lemma} The strictly exact foliation preserving diffeotopies of
$(M',\omega',{\mathcal F}')$ sharply move
$M$.\label{sharp}\end{Lemma}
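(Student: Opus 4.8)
The plan is to verify the definition of \emph{sharp movability} (as in \cite[2.2.3]{gromov}) for the submanifold $M\subset M'$, using a family of strictly exact foliation preserving diffeotopies. Recall that sharp movability of $M$ requires: given a compact set $A$ contained in a small neighbourhood of a point of $M$ together with a compact set $B$ disjoint from $M$, one must produce diffeotopies $\delta_t$ (in the pseudogroup under consideration) which push $A$ entirely off a prescribed hypersurface (or slab) transverse to $M$ while remaining fixed near $B$ and, crucially, moving at a controlled ``sharp'' rate. So the first step is to set up local foliated coordinates near a point $x_0\in M$: since $\mathcal{F}'$ has codimension $k$, we may choose coordinates $(u,s)$ on $\mathrm{Op}(x_0)$ in which the plaques are the level sets $\{s=\mathrm{const}\}$ with $s\in\R^k$, and in which $M$ is cut out by some of the transverse coordinates.

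Next I would construct the diffeotopies. The natural candidates are time-$t$ flows of foliated vector fields $\partial$ that are \emph{transverse-constant along plaques} in the sense needed to keep $\partial\,\lrcorner\,\omega'$ foliated closed — indeed, as already observed in the paper, any foliated vector field $\partial$ automatically integrates to a foliation preserving diffeotopy, and $\mathcal{L}_\partial\omega'=d(\partial\,\lrcorner\,\omega')=0$ in $\Omega^2(\mathcal{F}')$ only when $\partial\,\lrcorner\,\omega'$ is foliated closed. To get \emph{strict exactness} I would choose $\partial$ so that $\partial\,\lrcorner\,\omega'=d\alpha_t$ on $\mathcal F'$ with $\alpha_t$ supported away from the region where $\delta_t$ is the identity; concretely, take $\alpha_t$ to be a compactly supported function, and let $\partial_t=I_{\omega'}^{-1}(q(d\alpha_t))$ be the corresponding foliated Hamiltonian vector field (using the bundle isomorphism $I_{\omega'}\colon T\mathcal F'\to T^*\mathcal F'$). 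Then $\partial_t\,\lrcorner\,\omega'=d\alpha_t$ on $\mathcal F'$ by construction, so the flow $\delta_t$ is a strictly exact foliation preserving diffeotopy, and by Lemma~\ref{action} it acts on $\mathcal E_{\mathcal F'}$. The point of having \emph{Hamiltonian} generators is that one has great freedom in prescribing $\alpha_t$, hence in prescribing the motion of $M$: along each leaf, $\omega'$ is a genuine symplectic form, so Hamiltonian flows can be made to move $M$ in any direction tangent to the leaf, in particular off any transverse slab.

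The key geometric input is that $\dim\mathcal F'=\dim N$ is strictly positive (indeed $\ge 1$, since $\dim N>\dim\mathcal F\ge 0$), so $T\mathcal F'$ restricted to $M$ is a nontrivial bundle and $M$ genuinely sits inside the leaves with positive-dimensional normal directions available within each leaf; this is what lets us ``move $M$'' at all while staying foliation preserving, and it is why the hypothesis $\dim\mathcal F'=\dim N$ matters here. I would then check the quantitative ``sharpness'' clause: given the slab to be crossed, choose $\alpha_t$ so that the generating Hamiltonian vector field has the requisite minimal speed in the transverse-within-leaf direction on $A$ while vanishing near $B$ — this is a routine partition-of-unity and bump-function construction inside a single foliated chart, glued over the finitely many charts covering the compact piece of $M$ in question.

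The main obstacle I expect is the interface between the \emph{strictly} exact condition and the localisation required by sharp movability: sharp movability demands diffeotopies that equal the identity on (a neighbourhood of) a compact set $B$ disjoint from $M$, and simultaneously demands that the primitive $\alpha_t$ of $\partial_t\,\lrcorner\,\omega'$ vanish on the locus where $\delta_t$ is constant. Reconciling these — i.e.\ arranging a Hamiltonian generator whose flow is genuinely the identity on an open set and whose Hamiltonian function is supported off that open set, while still pushing $A$ across the slab — requires some care with the supports, since $d\alpha_t$ vanishing on an open set does not immediately force $\alpha_t$ to vanish there. The resolution is to work leafwise: on each plaque, $\alpha_t$ is only determined up to an additive constant, and one uses the freedom in that constant (chosen smoothly in the transverse parameter $s$) to kill $\alpha_t$ precisely on the plaques meeting the constancy region, exploiting that this region can be taken to be a union of plaques. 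Once the supports are organised this way, the remaining verification of the speed estimate is the same bump-function argument as in \cite[3.4.1]{gromov}, now carried out plaque-by-plaque, and the proof concludes by invoking Theorem~\ref{gromov}.
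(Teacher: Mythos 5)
Your proposal follows essentially the same route as the paper: there, too, one moves a small closed hypersurface $S\subset M$ by the foliated Hamiltonian field $I_{\omega'}^{-1}(q(d(aH)))$, where $H$ is chosen so that the field is transverse to $M$ inside the leaves of $\mathcal F'$ and $a$ is a bump function equal to $1$ near $S_\epsilon=\bigcup_{t\in[0,\epsilon]}\delta_t(S)$ and vanishing outside a neighbourhood of it, the sharpness being the same cut-off argument as in \cite[3.4.1]{gromov}. Your additional care in reconciling strict exactness with the locus where the diffeotopy is constant addresses a point the paper's terse proof passes over silently, but it does not change the method.
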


{\em Proof\/.} Suppose  $S$  is a closed hypersurface which lies
in a small open set $U$ of $M$. Take a vector
$\partial_0{\in}{T_{x_0}{\mathcal{F'}}}$ transversal to $U$ in $M$
and extend it to a foliated vector field $\partial =
I^{-1}_{\omega'}(q(dH))$ which is transversal to $U$ (provided we
take $U$ sufficiently small). The isotopy $\{\delta_t\}$ defined
by $\partial$ is foliation preserving. Take $S_{\epsilon} =
\bigcup_{t\in[0,\epsilon]}{\delta_t(S)}{\subseteq}M'$ and
$a{\in}C^\infty(M')$ so that $a$ vanishes outside a neighbourhood
of $\mathcal{OP}(S_{\epsilon})$ and $a=1$ on a smaller
neighbourhood of $S_\epsilon$. The diffeotopy corresponding to
$I^{-1}_{\omega'}(q(d(aH)))$ can move $S$ (along the foliation) as
sharply as we want.\qed

Combining the results obtained in Lemma~\ref{micro},
Lemma~\ref{action} and Lemma~\ref{sharp} we conclude by
Theorem~\ref{gromov} that
\begin{Proposition}The sheaf $\mathcal{E}_{\mathcal{F'}}|_{M}$  is
flexible.\label{flexible}
\end{Proposition}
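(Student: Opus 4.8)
\section{Proof proposal for Proposition~\ref{flexible}}

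The plan is to invoke Theorem~\ref{gromov} with $\Phi={\mathcal E}_{{\mathcal F}'}$, $V=M'$, and $V_0=M$. All the required hypotheses have in fact been assembled piece by piece in the preceding lemmas, so the proof is essentially a matter of verifying that they fit together. First I would note that $M\subset M'$ has positive codimension by the very construction of $M'$ in Section~5: indeed $\dim{\mathcal F}'=\dim N>\dim{\mathcal F}=\dim{\mathcal F}'\cap TM$, and more to the point $M'$ is an open neighbourhood of the zero section of a vector bundle of positive rank $\dim N-\dim{\mathcal F}$ over $M$, so $\mathrm{codim}_{M'}M=\dim N-\dim{\mathcal F}>0$ by the hypothesis $\dim{\mathcal F}<\dim N$ of Theorem~\ref{main}.

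Next I would recall that microflexibility of ${\mathcal E}_{{\mathcal F}'}$ was established in Lemma~\ref{micro}. It then remains to produce a pseudogroup of diffeotopies that (i) acts on ${\mathcal E}_{{\mathcal F}'}$ and (ii) sharply moves $M$ in $M'$ in the sense required by Theorem~\ref{gromov}. For this I would take the pseudogroup of strictly exact foliation preserving diffeotopies of $(M',{\mathcal F}',\omega')$: Lemma~\ref{action} shows that this pseudogroup acts on ${\mathcal E}_{{\mathcal F}'}$, and Lemma~\ref{sharp} shows that it sharply moves $M$. With these three ingredients in hand—microflexibility, an acting pseudogroup, and sharp movability of the positive-codimension submanifold $M$—Theorem~\ref{gromov} applies verbatim and yields that ${\mathcal E}_{{\mathcal F}'}|_M$ is flexible.

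The only genuine subtlety, and the place where I would spend the most care, is checking that the pseudogroup of \emph{strictly} exact diffeotopies (rather than merely exact ones) is large enough to sharply move $M$ while still honouring the action constructed in Lemma~\ref{action}. The two conditions pull in opposite directions: sharp movability wants diffeotopies supported near a small hypersurface $S\subset M$ and moving $S$ transversally to $M$ as fast as prescribed, while the action in Lemma~\ref{action} required the primitive $\alpha_t$ of $\delta_t'.\omega'$ to vanish on the locus where $\delta_t$ is constant. The construction in Lemma~\ref{sharp}, using a compactly supported Hamiltonian $aH$ and the field $I_{\omega'}^{-1}(q(d(aH)))$, is tailored precisely so that both hold simultaneously: the generating field is Hamiltonian for $\omega'|_{{\mathcal F}'}$, hence $\delta_t'.\omega'=d(aH\circ\delta_t^{-1}\cdot(\dots))$ is foliated exact with a primitive vanishing where $aH$ does, i.e. where $\delta_t$ is the identity, so the diffeotopy is strictly exact; and since $\partial_0$ was chosen transversal to $U$, the resulting isotopy does move $S$ off $M$. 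So the argument is just to package Lemmas~\ref{micro},~\ref{action},~\ref{sharp} as the hypotheses of Theorem~\ref{gromov}, and the flexibility of ${\mathcal E}_{{\mathcal F}'}|_M$ drops out.
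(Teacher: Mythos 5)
Your proposal is correct and follows exactly the paper's own argument: the paper likewise obtains Proposition~\ref{flexible} by combining Lemma~\ref{micro} (microflexibility), Lemma~\ref{action} (the action of strictly exact foliation preserving diffeotopies), and Lemma~\ref{sharp} (sharp movability of $M$) and then applying Theorem~\ref{gromov}. Your additional remarks on the positive codimension of $M$ in $M'$ and on why the \emph{strictly} exact diffeotopies suffice for both the action and the sharp movement are sensible elaborations of points the paper leaves implicit.
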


{\em Proof of Theorem~\ref{main}}. It now follows from
Proposition~\ref{local} and Proposition~\ref{flexible}  (as a
consequence of Sheaf Homomorphism Theorem (\cite[2.2.1]{gromov}))
that the composition $\mathcal{E}_{\mathcal{F'}}|_M\lgra{\mathcal
S}_{\mathcal{F'}}|_M\stackrel{d}{\lgra}\Psi_{{\mathcal F}'}|_M$ is
a weak homotopy equivalence. This implies that there is an $f'\in
{\mathcal S}_{\mathcal{F'}}|_M$ such that $Df'$ is homotopic to
$F'_0$ in Symp$^0(T{\mathcal F}',TN)$. Moreover, the homotopy
between $f_0'$ and $f'$ can be made to lie in an arbitrary $C^0$
neighbourhood of $f_0'$. If $f$ denotes the restriction of $f'$ to
$M$, then $f\in {\mathcal S}_{\mathcal F}$ and $Df$ is homotopic
to $F_0$ in the space Symp$^0(T{\mathcal F},TN)$. This proves
Theorem~\ref{main}.\qed\\

{\em Proof of Corollary~\ref{coro}}. Let $\omega_0$ be a linear
$2$-form on $\R^m$ of rank $r$. Then $r$ is an even integer, say
$r=2k$, such that $\omega_0^k\neq 0$ and $\omega_0^{k+1}=0$.
$\omega_0$ can be extended to a symplectic form $\bar{\omega}_0$
on $\R^{m}\times \R^{m-2k}=\R^{2(m-k)}$. Observe that an injective
linear map $\bar{F}:\R^{2(m-k)}\lgra \R^{2n}$ satisfying
$\bar{F}^*\sigma_0=\bar{\omega}_0$ restricts to an injective
linear map $F:\R^m\lgra \R^{2n}$ satisfying
$F^*\sigma_0=\omega_0$. The space of injective linear maps
$\bar{F}:\R^{2(m-k)}\lgra \R^{2n}$ satisfying 
$\bar{F}^*\sigma_0=\bar{\omega}_0$ has the same homotopy type as
the Stieffel manifold $V_{m-k} (\C^n)$ and it is known that
$V_{m-k} (\C^n)$ is $2n-2(m-k)=2n-2m+r$ connected.

Now let $\omega$ be a foliated closed $2$-form on $(M,{\mathcal
F})$, and suppose that rank $(\omega_x|_{T{\mathcal F}_x})\geq r$
for all $x\in M$ for some $0\leq r\leq m$. In view of Theorem
~\ref{main} it is enough to show the existence a monomorphism
$F:T{\mathcal F}\lgra \R^{2n}$ such that $F^*\sigma_0=\omega$. It
follows from the above discussion that such a bundle map exists if
$2n-2\dim{\mathcal F}+r\geq \dim M-1$. This completes the proof of
the corollary.

\end{document}